\documentclass[11pt]{amsart}

\newtheorem{theorem}{Theorem}[section]

\newtheorem{corollary}[theorem]{Corollary}

\newtheorem{lemma}[theorem]{Lemma}

\newtheorem{proposition}[theorem]{Proposition}

\theoremstyle{definition}

\usepackage[colorlinks, bookmarks=true]{hyperref}
\usepackage{color,graphicx,shortvrb}

\usepackage{enumerate}
\usepackage{amsmath}
\usepackage{amssymb}

\usepackage[latin 1]{inputenc}


\def\J#1#2#3{ \left\{ #1,#2,#3 \right\} }

\def\11{\textbf{$1$}}

\newcommand{\eps}{\varepsilon}
\newcommand{\norm}[1]{\|#1\|}
\newcommand{\Norm}[1]{\Bigl\|#1\Bigr\|}
\newcommand{\bgl}{\begin{eqnarray}}
\newcommand{\bglst}{\begin{eqnarray*}}
\newcommand{\egl}{\end{eqnarray}}
\newcommand{\eglst}{\end{eqnarray*}}
\newcommand{\N}{\rm{I\!N}}
\newcommand{\R}{\rm{I\!R}}

\newcommand{\beinschub}{\vspace{2mm}\begin{small}\newline\makebox[5ex]{}
     \begin{minipage}[t]{75ex}}
\newcommand{\eeinschub}{\end{minipage}\end{small}\vspace{3mm}\newline}
\newcommand{\betr}[1]{| #1  |}

\begin{document}

\numberwithin{equation}{section}

\title[Perturbation of $\ell_1$-copies in preduals of JBW$^*$-triples]{Perturbation of $\ell_1$-copies in preduals of JBW$^*$-triples}

\author[A.M. Peralta]{Antonio M. Peralta}
\address{Departamento de An{\'a}lisis Matem{\'a}tico, Universidad de Granada,\\
Facultad de Ciencias 18071, Granada, Spain}
\curraddr{Visiting Professor at Department of Mathematics, College of Science, King Saud University, P.O.Box 2455-5, Riyadh-11451, Kingdom of Saudi Arabia.}
\email{aperalta@ugr.es}

\author[H. Pfitzner]{Hermann Pfitzner}
\address{Université d'Orléans,\\
BP 6759,\\
F-45067 Orléans Cedex 2,\\
France}
\email{pfitzner@labomath.univ-orleans.fr}

\thanks{First author partially supported by the Spanish Ministry of Science and Innovation,
D.G.I. project no. MTM2011-23843, Junta de Andaluc\'{\i}a grant FQM375 and Deanship of Scientific
Research at King Saud University (Saudi Arabia) research group no. RGP-361.}


\keywords{}

\date{}
\maketitle

\begin{abstract}
Two normal functionals on a JBW$^*$-triple are known to be orthogonal if and only if they are $L$-orthogonal (meaning that they
span an isometric copy of $\ell_1(2)$).
This is shown to be stable under small norm perturbations in the following sense: if the linear span of the two functionals
is isometric up to $\delta>0$ to $\ell_1(2)$, then the functionals are less far (in norm) than $\eps>0$ from two orthogonal functionals,
where $\eps\to0$ as $\delta\to0$.
Analogous statements for finitely and even infinitely many functionals hold as well.
And so does a corresponding statement for non-normal functionals.
Our results have been known for C$^*$-algebras.
\end{abstract}

\maketitle
\thispagestyle{empty}

\section{Introduction}\label{sec:intro}\noindent
The starting point of this note consists in two well-known facts. First, two elements in the predual of a JBW$^*$-triple
are orthogonal if and only if they span the two-dimensional $\ell_1(2)$ isometrically and second, in preduals of von Neumann algebras
this still makes sense after small norm perturbations, moreover not only for two but for finitely and, up to subsequences,
even infinitely many elements.\smallskip

For example, if a sequence $(\varphi_n)$ in $L^1([0,1])$ is such that $$\sum\betr{\alpha_n}\geq\norm{\sum\alpha_n\varphi_n}\geq r\sum\betr{\alpha_n},$$
then there are pairwise orthogonal $\widetilde\varphi_n$ such that $\norm{\varphi_n-\widetilde\varphi_n}<\eps$, with $\eps\to0$ as $r\to1$
(and, of course, with $\norm{\sum\alpha_n\widetilde\varphi_n}=\sum\betr{\alpha_n}$), see \cite{Dor}.
Briefly, in $L^1$ a sequence near to an isometric copy of $\ell_1$ is near to an orthogonal sequence.
Up to subsequences the same follows from \cite[Th.\ 1.2]{Pfi02} for arbitrary von Neumann preduals.
Analogous non-normal versions hold, too: it can be deduced from \cite[Prop.\ 1.3]{Pfi02} that if a sequence $(\varphi_n)$
in the dual of a C$^*$-algebra $A$ is as above then for any $\eps>0$ there are pairwise orthogonal elements $c_k\in A$ such that
$\varphi_{n_k}(c_k)>(1-\eps)r$ for some subsequence $(\varphi_{n_k})$; a similar formulation
(reminiscent of Pe{\l}czy\'{n}ski's property $(V)$ or Grothendieck's criterion of weak compactness in the dual of a $C(K)$-space)
is that if one accepts to replace $r$ by a worse constant (e.g. $r^2/2$) in the last inequality, then the $c_k$'s can be considered
to be selfadjoint elements of a commutative subalgebra of $A$ \cite[\S6, Lem.\ 6.3]{Pfi02}.\smallskip

In view of the numerous generalizations of geometric (=Banach space theoretic) properties from C$^*$-algebras to JB$^*$-triples it is natural
to conjecture similar results for JB$^*$-triples. The aim of this article is to state and prove them.\smallskip

Let us describe these results. They divide into two parts, contained in Sections \ref{sec 3} and \ref{sec 4}, respectively,
depending on whether the $\ell_1$-copies are of finite or infinite dimension.
Basic to all this, as already alluded to in the first paragraph, is the result of Y. Friedman, B. Russo \cite[Lem.\  2.3]{FriRu87},
according to which
two functionals are algebraically orthogonal if and only if they are $L$-orthogonal where the latter means that the two functionals span
an isometric  copy of $\ell_1(2)$ (see Section \ref{sec preliminaries} for definitions).\smallskip

The main result of Section \ref{sec 3}, Theorem \ref{t quantitative version of L-orthogonality in the dual space of a JB*-triple n-functionals},
yields a quantification of algebraic orthogonality for finitely many arbitrary elements in the dual of a JB$^*$-triple $E$:
if functionals $\varphi_1,\ldots,\varphi_n$ in $E^*$ span $\ell_1(n)$ $(1-\delta)$-isomorphically then they are near to pairwise orthogonal functionals
as $\delta$ is near to $0$ and moreover they attain their norm up to a given $\eps>0$ on some pairwise orthogonal elements in $E$.
A quantification of orthogonality in $E$ is not possible in general but it is for tripotents, see Proposition
\ref{p quantitative version of M-orthogonal tripotents}.\smallskip

Section \ref{sec 4} contains what has been described in the second paragraph.
More specifically, if a  bounded sequence $(\varphi_n)$ in a JBW$^*$-predual $W_*$ spans $\ell_1$ almost isometrically,
then according to Theorem \ref{t asymptotically isometric copies of ell_1 JBW*-triple preduals} there are pairwise orthogonal
$\widetilde\varphi_k$ such that $\norm{\varphi_{n_k}-\widetilde\varphi_k}\to0$ for some subsequence $\varphi_{n_k}$.
The non-normal case, treated in Theorem \ref{t isomorphic copies in the dual of a JB-triple}, can be resumed by saying that if the $\varphi_n$'s
span $\ell_1$ $r$-isomorphically in the dual of a JB$^*$-triple $E$, then $E$ contains an abelian subtriple such that the
restrictions of an appropriate subsequence of the $\varphi_n$ to this subtriple still span $\ell_1$ $(1-\eps)$-isomorphically for any given $\eps>0$.
A quantitative version of Theorem \ref{t isomorphic copies in the dual of a JB-triple} is already contained in \cite[Th.\ 2.3]{FerPe09}
and, what is more, the arguments in \cite{FerPe09} and \cite{ChuMellon97} seem to lend themselves to a quantification that gives our Theorem
\ref{t isomorphic copies in the dual of a JB-triple}.
We refrained from pursuing this approach for it seems more natural to deduce the infinite dimensional case from the finite dimensional one,
all the more because the latter has an interest in its own.\bigskip

\section{Preliminaries}\label{sec preliminaries}
\noindent
We shall follow the standard notation employed, for example in \cite{FerPe06}, \cite{FerPe09} or \cite{BunPe07}.
For Banach space theory we refer, e.g., to \cite{Diestel1984, FHHMZ, JohLin_Vol1-2}.\smallskip

We recall that a JB$^*$-triple \cite{Kaup83} is a complex Banach space $E$ equipped with a
continuous ternary product $\{.,.,.\}$ symmetric and bilinear in
the outer variables and conjugate linear in the middle one satisfying \begin{equation}\label{eq Jordan identity}
L(x,y) \J abc = \J {L(x,y)a}bc - \J a{L(y,x)b}c + \J
ab{L(x,y)c},\end{equation} such that $\|L(a,a)\| = \|a\|^2$ and $L(a,a)$ is an
hermitian operator on $E$ with non-negative spectrum, where
$L(a,b)$ is given by $L(a,b) y =\J aby$.\smallskip

Every C$^*$-algebra is a JB$^*$-triple with respect to the triple
product given by \hyphenation{product} $ \J xyz =\frac12 (x y^* z +z y^*
x).$ The same triple product equipes the space $B(H,K)$, of all bounded linear operators between
complex Hilbert spaces $H$ and $K$, with a structure of JB$^*$-triples. Among the examples involving Jordan algebras,
we can say that every JB$^*$-algebra is a JB$^*$-triple under the triple
product $\J xyz = (x\circ y^*) \circ z + (z\circ y^*)\circ x - (x\circ z)\circ y^*$.\smallskip

An element $u$ in a JB$^*$-triple $E$ is said to be a
\emph{tripotent} when it is a fixed point of the triple product, that is, when $u= \J uuu$.
Given a tripotent $u\in E$, the
mappings $P_{i} (u) : E\to E_{i} (u)$, $(i=0,1,2)$, defined by
$$P_2 (u) = L(u,u)(2 L(u,u) -id_{E}), \ P_1 (u) = 4
L(u,u)(id_{E}-L(u,u)),$$ $$ \ \hbox{ and } P_0 (u) =
(id_{E}-L(u,u)) (id_{E}-2 L(u,u)),$$ are contractive linear
projections, called the \emph{Peirce projections} associated with $u$.
The range of $P_i(u)$ is the eigenspace $E_i(u)$ of $L(u, u)$ corresponding to the eigenvalue $\frac{i}{2},$ and
$$E= E_{2} (u) \oplus E_{1} (u)\oplus E_0 (u)$$ is the \emph{Peirce decomposition} of $E$ relative to $u$.
Furthermore, the following Peirce rules are
satisfied, \begin{equation}
\label{eq peirce rules1} \J {E_{2} (u)}{E_{0}(u)}{E} = \J {E_{0}
(u)}{E_{2}(u)}{E} =\{0\},
\end{equation} \begin{equation}
\label{eq peirce rules2} \J {E_{i}(u)}{E_{j} (u)}{E_{k}
(u)}\subseteq E_{i-j+k} (u),
\end{equation} where $E_{i-j+k} (u)=\{0\}$ whenever
$i-j+k \notin \{ 0,1,2\}$ (\cite{FriRu85} or \cite[Th.\ 1.2.44]{Chu2012}).
For $x,y,z$ in a JB$^*$-triple $E$ we have \cite[Cor.\ 3]{FriRu86}
\bgl \|\{x,y,z\}\|\leq \norm{x}\norm{y}\norm{z}.\label{eq 1a}\egl
A tripotent $u$ is called \emph{complete} if $E_0 (u) $ reduces to $\{0\}$.\smallskip

The Peirce-2 subspace $E_{2} (u)$ is a unital JB$^*$-algebra
with unit $u$, product $a \circ_{u} b = \J aub$ and involution $a^{\sharp_{u}} = \J uau$
(c.f. \cite[Theorem 2.2]{BraKaUp78} and \cite[Theorem 3.7]{KaUp77}; \cite[p.\ 185]{Chu2012}).\smallskip

A JBW$^*$-triple is a JB$^*$-triple which is also a dual Banach space. Every JBW$^*$-triple admits a unique isometric predual
and its triple product is separately weak$^*$-continuous (\cite{BarTi}, \cite{Horn87}, \cite[Th.\ 3.3.9]{Chu2012}). Consequently, the Peirce projections
associated with a tripotent in a JBW$^*$-triple are weak$^*$-continuous.
The second dual of a JB$^*$-triple is a JBW$^*$-triple such that its triple product reduces to the original one (cf.\ \cite{Dineen86}, \cite[Cor.\ 3.3.5]{Chu2012}).
The class of JBW$^*$-triples includes all von Neumann algebras.
Functionals on a JBW$^*$-triple $W$ are called normal if they belong to the predual $W_*$.\smallskip

JBW$^*$-triples play, in the category of JB$^*$-triples, a similar role to that played by von Neumann algebras in the setting of C$^*$-algebras.
A JB$^*$-triple need not have any non-zero tripotent. However, since the complete tripotents of a JB$^*$-triple $E$ coincide with the complex
and the real extreme points of its closed unit ball (cf. \cite[Lem.\ 4.1]{BraKaUp78}, \cite[Prop.\ 3.5]{KaUp77}, \cite[Th.\ 3.2.3]{Chu2012}),
the Krein-Milman theorem implies that every JBW$^*$-triple contains an abundant set of tripotents.\smallskip

Given elements $a, b$ in a JB$^*$-triple $E$, the symbol $Q(a,b)$ will denote the
conjugate linear operator on $E$ defined by $Q(a,b)(x):=\J axb$. We write $Q(a)$ instead of $Q(a,a)$.
The \emph{Bergmann operator} $B(a,b): E \to E$ is the mapping given by
$B(a,b) (z) = z -2L(a,b) (z) + Q(a)Q(b) (z),$ for all $z$ in $E$
(compare \cite{Loos} or \cite[page 305]{Up85}). In the particular
case of $u$ being a tripotent, we have $P_0 (u) =
B(u,u)$.\medskip

Throughout the paper, given a Banach space $X$, we consider $X$ as a closed subspace of $X^{**},$ via its natural isometric embedding,
and for each closed subspace $Y$ of $X$ we shall identify $\overline{Y}^{\sigma(X^{**},X^{*})}$, the weak$^*$-closure of $Y$ in $X^{**}$,
with $Y^{**}$.\smallskip

A normalized sequence $(x_n)$ in a Banach space $X$ is said to span $\ell_1$ $r$-isomorphically if
$\|\sum_n\alpha_n x_n\|\geq r\sum_n\betr{\alpha_n}$
for all scalars $\alpha_n$. If there is a sequence $(\delta_m)$ such that
$0\leq\delta_m\to0$ and $(x_n)_{n\geq m}$ spans $\ell_1$ $\delta_m$-isomorphically for all $m$ then $(x_n)$ is said to span $\ell_1$ almost isometrically.\bigskip

\noindent{\sc The strong-$^*$-topology}.
Given a norm-one element $\varphi$ in the predual $W_*$ of a JBW$^*$-triple $W$, and
a norm-one element $z$ in $W$ with $\varphi (z) =1$, it follows from \cite[Proposition
1.2]{BarFri90} that the assignment $$(x,y)\mapsto \varphi\J xyz$$ defines a positive
sesquilinear form on $W.$ Moreover, for every norm-one element $w$ in $W$
satisfying $\varphi (w) =1$, we have $\varphi\J xyz = \varphi\J
xyw,$ for all $x,y\in W$. The mapping $ x\mapsto \|x\|_{\varphi}:=
\left(\varphi\J xxz\right)^{\frac{1}{2}},$ defines a prehilbertian
seminorm on $W$.
The \emph{strong$^*$-topology} of $W$, introduced by T.J.
Barton and Y. Friedman in \cite{BarFri90}, is
the topology on $W$ generated by the family $\{
\|\cdot\|_{\varphi}:\varphi\in {W_*}, \|\varphi \| =1
\},$ and will be denoted by $s^*(W,W_*)$.
From \cite[page 258]{BarFri90} we get $|\varphi (x)| \leq \|x\|_{\varphi}$ for any $x\in W,$ and it is clear from this that $s^*(W,W_*)$
is stronger than the weak$^*$-topology of $W$.\smallskip

It is known that the triple product of a JBW$^*$-triple is jointly strong$^*$-continuous on bounded sets
(\cite[Th.\ 9]{PeRo}, \cite[Th., page 103]{Rodr1991}).
Another interesting property tells us that the strong*-topology of a JBW$^*$-triple $W$ is compatible with the duality $(W, W_{*})$
(i.e. a linear functional on $W$ is weak$^*$-continuous if, and only if, it is strong$^*$-continuous, see \cite[Th.\ 9]{PeRo}).
The bipolar theorem implies that for convex sets of $W$, weak$^*$-closure and strong$^*$-closure coincide.
It follows that the closed unit ball of a weak$^*$-dense JB$^*$-subtriple $E$ of a JBW$^*$-triple $W$ is strong$^*$-dense in the closed unit ball
of $W$. This result, known as \emph{Kaplansky Density theorem for JBW$^*$-triples}, was established by J.T. Barton and Y. Friedman in \cite[Cor.\ 3.3]{BarFri90}.\smallskip

In 2001, L.J. Bunce culminated the description of the fundamental properties of the strong$^*$-topology showing that
for every JBW$^*$-subtriple $F$ of a JBW$^*$-triple $W$, the strong$^*$-topology of $F$ coincides with the restriction to $F$ of
the strong$^*$-topology of $W$, that is, $s^*(F,F_*) = s^* (W,W_*)|_{F}$ \cite{Bunce01}. It is also known that a linear map between
JBW$^*$-triples is strong$^*$-continuous if, and only if, it is weak$^*$-continuous (compare \cite[page 621]{PeRo}). \bigskip

\noindent{\sc Functional calculus, open and closed tripotents}.
Let $x$ be an element in a JB$^*$-triple $E$. Throughout the paper,
the symbol $E_{x}$ will stand for the norm-closed subtriple of $E$
generated by $x$. It is known that $E_{x}$ is JB$^*$-triple
isomorphic to the abelian C$^*$-algebra $C_{0} (L)$ of all complex-valued
continuous functions on $L$ vanishing at $0$, where $L$ is a
locally compact subset of $(0,\|x\|]$ satisfying that $L\cup\{0\}$
is compact. Further, there exists a JB$^*$-triple isomorphism $\Psi : E_x
\to C_{0} (L)$ satisfying $\Psi (x) (t) = t,$ for all $t$ in
$L$ (compare \cite[1.15]{Kaup83}).
Given a continuous complex-valued function $f: L\cup\{0\} \to \mathbb{C}$ vanishing at 0,
the \emph{continuous triple functional calculus} $f(x)$ will have its usual meaning $f(x)=\Psi^{-1}(f)$.\smallskip

We define $x^{[1]}:=x$ and $x^{[2n+1]} = \{ x, x, x^{[2 n-1]}\}$ for every $n\in\mathbb{N}$. JB$^*$-triples are power associative, that is, $$\J{x^{[2k-1]}}{x^{[2l-1]}}{x^{[2 m-1]}}=x^{[2(k+l+m)-3]},$$ for every $k,l,m \in \mathbb{N}$
(cf. \cite[\S 3.3]{Loos} or \cite[Lem.\ 1.2.10]{Chu2012} or simply apply the Jordan identity).\smallskip

Suppose now that $\norm{x}=1$ and that $E$ is a subtriple of a JBW$^*$-triple $W$, for example $W=E^{**}$.
It is known that $(x^{[2n+1]})$ converges in the strong$^*$-topology to the tripotent $u(x)=\chi_{\{1\}}(x)\in\overline{E_x}^{w^*}\subset W$,
which is called the \emph{support tripotent} of $x$ (\cite[Lem.\ 3.3]{EdRu96}). (By $\chi_A$ we denote the characteristic function of a set $A$.)
By functional calculus there exist, for each $n\in\mathbb{N}$, unique elements $x^{[\frac{1}{2n-1}]}$ in $E_x \cong C_{0} (L)$ satisfying
$\left(x^{[\frac{1}{2n-1}]}\right)^{[2n-1]} = x$. The latter are strong$^*$-convergent to the tripotent
$r(x)=\chi_{(0,1]}(x)\in\overline{E_x}^{w^*}\subset W$, which is called the \emph{range tripotent of $x$}.
The tripotent $r(x)$ is the smallest tripotent $e\in W$ satisfying that $x$ is
positive in the JBW$^*$-algebra $W_{2} (e)$ (see, for example, \cite[comments before Lemma 3.1]{EdRu88} or \cite[\S 2]{BuChuZa}).
The inequalities $$u(x)\leq x^{[2n+1]} \leq x\leq r(x)$$
hold in $W_2(r(x))$ for every norm-one element $x\in E.$\smallskip

A tripotent $u$, in a JB$^*$-triple $E$, is said to be \emph{bounded} if there exists a norm-one
element $x\in E$ such that $L(u,u)x=u$. The element $x$ is called a bound of $u$
and we write $u\leq x$. We shall write $y\leq u$ whenever $y$ is a positive element in
the JB$^*$-algebra $E_2 (u)$ (cf.\ \cite[pages 79-80]{FerPe06}).
A JB$^*$-triple $E$ need not have a cone of positive elements and the lacking of order implies that the symbol $x\leq y$ makes no sense
for general elements $x,y\in E$. However, it should be remarked that, given $x,y\in E$ and tripotents $u,v\in E$ with $x\leq u$
and $u\leq y\leq v$, we have $x\leq u\leq y \leq v$ with respect to the natural order of the JB$^*$-algebra $E_2 (v)$.
Note further, for Lemma \ref{l FerPer MathScan 2009 L22} below, that we have $u\leq x$ in the just mentioned sense if we have
$u\leq x$ in the JBW$^*$-algebra $E_2^{**}(r(x))$.\smallskip 

Inspired by the notion of open projection in the bidual of a C$^*$-algebra introduced and studied by C.\ Akemann, L.\ Brown, and G.K.\ Pedersen
(cf. \cite{Akemann69} or \cite{AkPed73,AkPed92} or \cite[Proposition 3.11.9]{Ped}), C.M.\ Edwards and G.T.\ R\"uttimann develop the notion of
open tripotent in the bidual of a JB$^*$-triple $E$: we say that a tripotent $e$ in $E^{**}$ is \emph{open} if $E^{**}_{2} (e) \cap E$ is
weak$^*$-dense in $E^{**}_{2} (e)$ (see \cite[page 167]{EdRu96}). It is known that the range tripotent of a norm-one element of $E$ is open
(cf. \cite[Proposition 2.1]{BuChuZa}).
A tripotent $e$ in $E^{**}$ is said to be \emph{compact-$G_{\delta}$} (relative to $E$) if there exists a norm-one element $x$ in $E$ such that
$e$ coincides with $u(x)$, the support tripotent of $x$. A tripotent $e$ in $E^{**}$ is said to be \emph{compact} (relative to $E$)
if there exists a decreasing net $(e_{\lambda})$ of tripotents in $E^{**}$ which are compact-$G_{\delta}$ with infimum $e$,
or if $e$ is zero (cf. \cite[pages 163-164]{EdRu96}). In the terminology of \cite{FerPe06}, we say that a tripotent $u$ in $E^{**}$ is closed
if $E\cap E_0^{**}(u)$ is weak$^*$-dense in $E_0^{**}(u)$. The equivalence established in \cite[Th.\ 2.6]{FerPe06} shows that a tripotent
$e\in E^{**}$ is compact if and only if $e$ is closed and bounded by an element of $E$.\medskip

\noindent
{\sc Small perturbation of a normal functional.}
Let $\varphi\in W_*$ be a functional in the predual of a JBW$^*$-triple $W$ and let $e$ be a tripotent in $W$.
In \cite[Proposition 1]{FriRu85}, Y. Friedman and B. Russo prove that $\left\| \varphi P_{2}( e) \right\|
=\left\| \varphi \right\|$ if and only if $\varphi =\varphi P_{2} (e).$ Using the techniques of ultraproducts of Banach spaces,
J. Becerra-Guerrero and A. Rodríguez Palacios obtained the following quantitative version of the above property which will be used
throughout this article.

\begin{lemma}\label{l BeRo Lemma 2.2}\cite[Lem.\ 2.2]{BeRo03} Given $\varepsilon > 0$, there exists $\eta = \eta(\varepsilon)> 0$ such that,
for every JB$^*$-triple $E$, every non-zero tripotent $e$ in $E$, and every $\varphi$ in $E^*$ with $\left\|\varphi\right\|\leq 1$ and
$\left\| \varphi P_2 (e) \right\| \geq 1 -\eta$, we have  $\left\|\varphi - \varphi P_2 (e)\right\| < \varepsilon.$ $\hfill\Box$
\end{lemma}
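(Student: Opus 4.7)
The plan is to argue by contradiction via an ultraproduct argument, reducing the quantitative statement to its known qualitative version \cite[Prop.\ 1]{FriRu85} that $\|\varphi P_2(e)\|=\|\varphi\|$ forces $\varphi=\varphi P_2(e)$. Suppose the lemma fails. Then there is $\varepsilon_0>0$, a sequence of JB$^*$-triples $E_n$, non-zero tripotents $e_n\in E_n$, and functionals $\varphi_n\in E_n^*$ with $\|\varphi_n\|\le1$ and $\|\varphi_n P_2(e_n)\|\ge 1-1/n$, yet $\|\varphi_n-\varphi_n P_2(e_n)\|\ge\varepsilon_0$ for every $n$. For each $n$, pick a witness $y_n$ in the closed unit ball of $E_n$ with $|(\varphi_n-\varphi_n P_2(e_n))(y_n)|\ge\varepsilon_0-1/n$.

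Fix a free ultrafilter $\mathcal{U}$ on $\mathbb{N}$ and form the Banach space ultraproduct $E:=(E_n)_{\mathcal{U}}$. Since the defining identity \eqref{eq Jordan identity} together with $\|L(a,a)\|=\|a\|^2$ and the spectral condition transfer to the ultraproduct (Dineen's result, extended by several authors to JB$^*$-triples), $E$ carries a canonical JB$^*$-triple structure with triple product $\{[x_n],[y_n],[z_n]\}_\mathcal{U}=[\{x_n,y_n,z_n\}]$. Let $e:=[e_n]\in E$; this is a tripotent because $\{e_n,e_n,e_n\}=e_n$ for each $n$. Define the ultrapower functional $\varphi\in E^*$ by $\varphi([x_n]):=\lim_{\mathcal{U}}\varphi_n(x_n)$, so $\|\varphi\|\le1$.

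The key observation is that the Peirce-$2$ projection is a polynomial in $L(e,e)$, namely $P_2(e)=L(e,e)(2L(e,e)-\mathrm{id})$; therefore it commutes with the ultraproduct construction in the sense that $P_2(e)([x_n])=[P_2(e_n)(x_n)]$. Consequently
\[
\|\varphi P_2(e)\|=\lim_{\mathcal{U}}\|\varphi_n P_2(e_n)\|=1=\|\varphi\|,
\]
and the Friedman--Russo qualitative result gives $\varphi=\varphi P_2(e)$. On the other hand, setting $y:=[y_n]\in E$ (of norm at most $1$),
\[
|\varphi(y)-\varphi P_2(e)(y)|=\lim_{\mathcal{U}}|\varphi_n(y_n)-\varphi_n P_2(e_n)(y_n)|\ge\varepsilon_0,
\]
contradicting $\varphi=\varphi P_2(e)$. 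This contradiction yields the existence of $\eta(\varepsilon)>0$ with the required property.

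The main obstacle is step two, namely justifying that the ultraproduct of JB$^*$-triples is again a JB$^*$-triple and that the Peirce decomposition is preserved by the ultraproduct operation. Once this machinery is in place, the remainder of the argument is a clean limit-of-approximate-extrema passage. An alternative route that avoids ultraproducts altogether would be a direct contradiction argument within the bidual, exploiting the fact that $\|\varphi P_2(e)\|$ is attained, together with weak$^*$-compactness of the unit ball of $E^*$; however, the ultraproduct route is cleaner because it transports the problem to a setting in which the hypothesis becomes an equality and the existing qualitative theorem applies verbatim.
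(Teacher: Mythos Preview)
Your argument is correct and is essentially the approach that the paper attributes to the original source: the paper does not prove this lemma itself but cites it from \cite[Lem.\ 2.2]{BeRo03}, explicitly noting that Becerra-Guerrero and Rodr\'iguez Palacios obtained it ``using the techniques of ultraproducts of Banach spaces.'' Your proof matches that description: the contradiction setup, the formation of the JB$^*$-triple ultraproduct (Dineen), the observation that $P_2(e)=L(e,e)(2L(e,e)-\mathrm{id})$ commutes with the ultraproduct construction so that $\varphi P_2(e)=\mathcal{J}([\varphi_n P_2(e_n)])$, and the reduction to the qualitative Friedman--Russo result are all sound and standard. The only detail you might make explicit is that the Friedman--Russo proposition, stated for normal functionals on a JBW$^*$-triple, applies here because $\varphi\in E^*=(E^{**})_*$ and $e\in E\subset E^{**}$, with the Peirce projection on $E$ being the restriction of that on $E^{**}$; this is implicit in your use of it but worth a sentence.
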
\medskip

\noindent{\sc Orthogonality and geometric $M$- and $L$-orthogonality}.
We recall that elements $a,b$ in a JB$^*$-triple $E$ are said to be \emph{algebraically orthogonal} or simply \emph{orthogonal} (written $a\perp b$) if $L(a,b) =0$.
If we consider a C$^*$-algebra as a JB$^*$-triple then two elements are orthogonal in the C$^*$-sense if and only if they are orthogonal
in the triple sense.
It is known (compare \cite[Lem.\ 1]{BurFerGarMarPe}) that $a\perp b$ if and only if one of the following statements holds:\smallskip

\begin{tabular}{ccc}\label{ref orthogo}
  
  $\J aab =0;$ & $a \perp r(b);$ & $r(a) \perp r(b);$ \\
  & & \\
  $E^{**}_2(r(a)) \perp E^{**}_2(r(b))$; & $r(a) \in E^{**}_0 (r(b))$; & $a \in E^{**}_0 (r(b))$; \\
  & & \\
  $b \in E^{**}_0 (r(a))$; & $E_a \perp E_b$. & \\

\end{tabular}\medskip

It follows from Peirce rules (\ref{eq peirce rules1}) that,
for each tripotent $u$ in a JB$^*$-triple $E$, $E_0(u) \perp E_2 (u).$\smallskip

For each norm one functional $\varphi$ in the predual of a JBW$^*$-triple $W$, the square of the prehilbertian seminorm $\norm{\cdot}_{\varphi}$ is additive on orthogonal elements:
$$\norm{a+b}_{\varphi}^2=\norm{a}_{\varphi}^2+\norm{b}_{\varphi}^2,\quad\forall a\perp b.$$

\noindent
We recall that a functional $\phi$ in the predual of a JBW$^*$-algebra $M$ is said to be \emph{faithful} if for each $a\geq 0$ in $M$, $\phi (a) =0$ implies $a=0$.\smallskip

Let $\varphi$ be a norm-one functional in the predual of a
JBW$^*$-triple $W$. By \cite[Prop.\ 2]{FriRu85}, there exists a
unique tripotent $e= e(\varphi) \in W$ satisfying $\varphi =
\varphi P_{2} (e)$ and $\varphi|_{W_{2}(e)}$ is a faithful normal
state of the JBW$^*$-algebra $W_{2} (e)$. This unique tripotent $e$ is called the \emph{support tripotent} of
$\varphi$, and will be denoted by $e(\varphi)$. (Note that at the time of the writing of \cite{FriRu85} condition \cite[(1.13)]{FriRu85}
was not yet known to hold for all JBW$^*$-triples.)\smallskip

Now, according to \cite{FriRu87} and \cite{EdRu01}, we define two functionals $\varphi$ and $\psi$ in the predual of a JBW$^*$-triple $W$ to be
\emph{algebraically orthogonal} or simply \emph{orthogonal}, denoted by $\varphi\perp \psi$, if their support tripotents are orthogonal in $W$, that is $e(\varphi)\perp e(\psi)$.\medskip

Elements $x,y$ in a normed space $X$ are said to be \emph{$L$-orthogonal}
(and we write $x \perp_{L} y$) if $\|x \pm y\| = \|x\| + \|y\|$, and are said to be \emph{$M$-orthogonal} (denoted by $x \perp_M y$) if $\|x \pm y\| = \max\left\{ \|x\| , \|y\|\right\}$.\smallskip

Given $a,b$ in $E$, it follows from \cite[Lem.\  1.3$(a)$]{FriRu85} that $a\perp_{M} b$ whenever $a\perp b$.
In general the reverse implication does not hold, for example $(1/2,1,0)$ and $(1/2,0,1)$ in the C$^*$-algebra $l^{\infty}(3)$ are
$M$-orthogonal but not orthogonal.
The following result is borrowed from \cite{FriRu87} and \cite{EdRu01}.

\begin{lemma}\label{l EdRu01 L-orthogonal}\cite[Lem.\  2.3]{FriRu87} and \cite[Theorem 5.4 and Lemma 5.5]{EdRu01}
Let $\varphi$ and $\psi$ be two functionals in the predual of a JBW$^*$-triple $W$. Then $\varphi\perp \psi$ if, and only if,
$\varphi\perp_{L} \psi$. Furthermore, given two tripotents $e$ and $u$ in $W$, then $e\perp u$ if, and only if,
$e\perp_{M} u$. $\hfill\Box$\end{lemma}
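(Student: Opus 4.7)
The plan is to establish the two equivalences in tandem. Both forward directions ($\perp\Rightarrow\perp_{L}$ for functionals and $\perp\Rightarrow\perp_{M}$ for tripotents) are elementary. With $e:=e(\varphi)$ and $u:=e(\psi)$ orthogonal, the Peirce rule (\ref{eq peirce rules1}) makes all mixed triple products in $\{e\pm u,e\pm u,e\pm u\}$ vanish, so $e+u$ and $e-u$ are tripotents, hence of norm $1$; combined with $\varphi(u)=\varphi P_{2}(e)(u)=0$ and symmetrically $\psi(e)=0$, one finds $(\varphi\pm\psi)(e\pm u)=2$, and the triangle inequality upgrades this to $\norm{\varphi\pm\psi}=\norm{\varphi}+\norm{\psi}$. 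The tripotent version is just the computation $\norm{e\pm u}=1=\max(\norm{e},\norm{u})$.

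For the converse of the tripotent part, assume $e,u$ are non-zero tripotents with $\norm{e\pm u}\leq 1$. For each normal state $\rho$ of the JBW$^{*}$-algebra $W_{2}(e)$, the functional $\rho P_{2}(e)\in W_{*}$ has norm one, so
\[
|\rho P_{2}(e)(e\pm u)|=|1\pm\rho(P_{2}(e)u)|\leq 1.
\]
The elementary observation that $|1+z|\leq 1$ and $|1-z|\leq 1$ force $z=0$ yields $\rho(P_{2}(e)u)=0$ for every such $\rho$; since normal states separate $W_{2}(e)$, this gives $P_{2}(e)u=0$, and hence $P_{2}(e)(e+u)=e$ with $\norm{e+u}\leq 1$. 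The rigidity principle — a norm-one element $x$ in a JB$^{*}$-triple whose Peirce-$2$ projection at a tripotent $e$ equals $e$ satisfies $P_{1}(e)x=0$ (cf.\ \cite{FriRu85}) — then forces $P_{1}(e)u=0$. Thus $u\in W_{0}(e)$, i.e.\ $e\perp u$.

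The converse for functionals follows the same strategy. Normalise $\norm{\varphi}=\norm{\psi}=1$. Weak$^{*}$-compactness of the unit ball of $W$ produces, after unimodular rotations, $v,v'\in W$ with $\norm{v},\norm{v'}\leq 1$ and $\varphi(v)=\psi(v)=1=\varphi(v')=-\psi(v')$. Applying the rigidity principle twice (with respect to $e(\varphi)$ and to $e(\psi)$, using that $\varphi|_{W_{2}(e(\varphi))}$ and $\psi|_{W_{2}(e(\psi))}$ are faithful normal states of the respective unital JB$^{*}$-algebras) yields $v=e(\varphi)+v_{0}=e(\psi)+\tilde v_{0}$ and $v'=e(\varphi)+v'_{0}=-e(\psi)-\tilde v'_{0}$, with $v_{0},v'_{0}\in W_{0}(e(\varphi))$ and $\tilde v_{0},\tilde v'_{0}\in W_{0}(e(\psi))$. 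Applying $P_{2}(e(\varphi))$ and $P_{2}(e(\psi))$ to the averages $(v+v')/2$ and $(v-v')/2$ and invoking the rigidity lemma once more on those averages, a short Peirce-algebraic computation — the main obstacle, essentially the substance of \cite{FriRu87,EdRu01} — forces $P_{2}(e(\psi))e(\varphi)=P_{1}(e(\psi))e(\varphi)=0$, i.e.\ $e(\varphi)\perp e(\psi)$.
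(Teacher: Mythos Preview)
The paper does not prove this lemma: it is quoted verbatim from \cite{FriRu87} and \cite{EdRu01} and closed with a $\Box$. So there is no ``paper's own proof'' to compare against; your proposal is an attempt to supply what the paper deliberately omits.

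Your forward implications and the tripotent converse are correct and self-contained. The argument that $P_{2}(e)u=0$ via normal states of $W_{2}(e)$, followed by the rigidity principle (indeed \cite[Lemma~1.6]{FriRu85}: if $\norm{x}\leq1$ and $P_{2}(e)x=e$ then $P_{1}(e)x=0$) to kill $P_{1}(e)u$, is a clean and legitimate route to $e\perp u$.

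The functional converse, however, has a genuine gap. After obtaining the decompositions $v=e(\varphi)+v_{0}=e(\psi)+\tilde v_{0}$ and $v'=e(\varphi)+v'_{0}=-e(\psi)+\tilde v'_{0}$, you arrive at facts such as
\[
\tfrac{1}{2}(v-v')\;=\;e(\psi)+\tfrac{1}{2}(\tilde v_{0}-\tilde v'_{0})\;\in\; W_{0}(e(\varphi)),
\]
but this only yields $P_{i}(e(\varphi))e(\psi)=-P_{i}(e(\varphi))\tfrac{1}{2}(\tilde v_{0}-\tilde v'_{0})$ for $i=1,2$, not that either side vanishes: the Peirce gradings relative to $e(\varphi)$ and $e(\psi)$ are a~priori unrelated, so an element of $W_{0}(e(\psi))$ may well have nonzero $P_{2}(e(\varphi))$- and $P_{1}(e(\varphi))$-components. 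The ``short Peirce-algebraic computation'' you invoke is precisely the nontrivial content of \cite[Lemma~2.3]{FriRu87}, and you say as much (``the main obstacle, essentially the substance of \cite{FriRu87,EdRu01}''). In effect, on the one hard implication your argument defers to the very references the paper cites, so it does not constitute an independent proof of that part.
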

\noindent
Note that $\varphi\perp_{L} \psi$ in the lemma (with $\|\psi\|,\|\varphi\|\neq 0$) is equivalent to
$$\norm{\alpha\frac{\varphi}{\norm{\varphi}}+\beta\frac{\psi}{\|\psi\|}}=\betr{\alpha}+\betr{\beta},$$ for any $\alpha,\beta\in\mathbb{C}$.

\section{Quantitative versions of $M$- and $L$-orthogonality in JBW$^*$triples and their predual spaces}\label{sec 3}

\noindent
The main goal of this section is to establish quantitative versions of Lemma \ref{l EdRu01 L-orthogonal}
(see Propositions \ref{p quantitative version of L-orthogonality in the dual space of a JB*-triple},
\ref{p quantitative version of M-orthogonal tripotents} and Theorem
\ref{t quantitative version of L-orthogonality in the dual space of a JB*-triple n-functionals} below).
The proof will follow from a series of technical results.
The next two lemmas, which are included here for the sake of completeness, are borrowed from \cite{FerPe09}.

\begin{lemma}\label{l FerPer MathScan 2009 L.1.2}\cite[Lem.\  1.2]{FerPe09}
Let $E$ be a JB$^*$-triple, $e$ a tripotent in $E,$ and $x$ a
norm-one element in $E$ with $e \leq x$. Then $B(x,x)$ is a
contractive operator and $B(x,x) (y)$ belongs to $E_{0} (e),$ for
every $y$ in $E$. $\hfill\Box$
\end{lemma}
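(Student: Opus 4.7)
My plan is to separate the two assertions. Contractivity of $B(x,x)$ when $\|x\| \leq 1$ is a classical property of the Bergmann operator in a JB$^*$-triple, available from the fundamental theory of bounded symmetric domains (see e.g.\ \cite{Kaup83} or \cite{Loos}); I would simply quote this.

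For the range statement, the starting point is to turn the hypothesis $e \leq x$ (i.e.\ $L(e,e)x = e$) into an explicit decomposition. Reading the Peirce-$0/1/2$ components of both sides, with $L(e,e)$ acting by $0,\tfrac12,1$ respectively, one sees that $x$ has no Peirce-$1$ part and its Peirce-$2$ part coincides with $e$, so $x = e + x_0$ with $x_0 \in E_0(e)$ and $e \perp x_0$. Orthogonality gives $L(e,x_0) = L(x_0,e) = 0$, hence
\[
L(x,x) = L(e,e) + L(x_0,x_0), \qquad Q(x) = Q(e) + 2\,Q(e,x_0) + Q(x_0),
\]
where $Q(a,b)(z) := \J{a}{z}{b}$.

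Next I would decompose an arbitrary $y = y_0+y_1+y_2$ under the Peirce projections of $e$ and check $B(x,x)y_i \in E_0(e)$ on each piece. The Peirce rules~(\ref{eq peirce rules1}) collapse most terms: for $y_2 \in E_2(e)$ one obtains $B(x,x)y_2 = 0$; for $y_0 \in E_0(e)$ one obtains $B(x,x)y_0 = B(x_0,x_0)y_0$, which lies in $E_0(e)$ since the latter is a subtriple. The delicate piece is $y_1 \in E_1(e)$, for which direct expansion leaves
\[
B(x,x)y_1 = -2\,L(x_0,x_0)y_1 + 4\,\J{e}{\J{e}{y_1}{x_0}}{x_0}.
\]
Both summands lie in $E_1(e)$, so actual vanishing requires an algebraic identity. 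I would produce it from the commutator form of the Jordan identity, $[L(a,b),L(c,d)] = L(\J{a}{b}{c},d) - L(c,\J{b}{a}{d})$, applied to $(a,b,c,d) = (y_1,e,e,x_0)$: since $L(e,x_0)=0$ the commutator vanishes, giving $L(e,\J{e}{y_1}{x_0}) = \tfrac12\,L(y_1,x_0)$. Evaluating this operator identity at $x_0$ yields $\J{e}{\J{e}{y_1}{x_0}}{x_0} = \tfrac12\,L(x_0,x_0)y_1$, whence $B(x,x)y_1 = 0$.

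The main obstacle is precisely this $E_1(e)$-cancellation: Peirce rules alone leave a non-zero-looking expression, and one must extract the correct Jordan identity to see that the two remaining terms are equal. Once this identity is in hand the lemma drops out; the $E_0$- and $E_2$-components are routine Peirce bookkeeping. An equivalent path is to first establish the product formula $B(e+x_0,e+x_0) = B(e,e)\circ B(x_0,x_0)$ for orthogonal $e\perp x_0$ and then invoke $B(e,e) = P_0(e)$, but verifying this formula amounts to essentially the same Jordan-identity calculation.
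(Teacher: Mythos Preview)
The paper does not supply its own proof of this lemma: it is stated with a reference to \cite[Lem.\ 1.2]{FerPe09} and closed by a $\Box$. So there is nothing to compare against beyond noting that your proposal is a correct self-contained argument.

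Your proof is sound. The decomposition $x=e+x_0$ with $x_0\in E_0(e)$ is indeed forced by reading $L(e,e)x=e$ Peirce-componentwise; the cases $y\in E_2(e)$ and $y\in E_0(e)$ reduce as you say (using $Q(e)^2=\mathrm{id}$ on $E_2(e)$ for the former); and for $y_1\in E_1(e)$ your commutator identity applied to $(y_1,e,e,x_0)$ gives exactly $L(e,\{e,y_1,x_0\})=\tfrac12 L(y_1,x_0)$, whence the two surviving terms cancel. The alternative route via the product formula $B(e+x_0,e+x_0)=B(e,e)\,B(x_0,x_0)=P_0(e)\,B(x_0,x_0)$ for orthogonal $e\perp x_0$ is, as you observe, an equivalent repackaging of the same computation and is how the result is often stated in the Jordan-pair literature. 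Citing the contractivity of $B(x,x)$ for $\|x\|\le1$ from \cite{Loos} or \cite{Kaup83} is appropriate.
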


\begin{lemma}\label{l FerPer MathScan 2009 L21}\cite[Lem.\  2.1]{FerPe09}
Let $E$ be a JB$^*$-triple, $v$ be a tripotent in $E,$ and
$\varphi$ an element in the closed unit ball of $E^{*}$. Then for each
$y\in E_2(v)$ with $\|y\|\leq 1$ we have \begin{eqnarray} |\varphi (x-B(y,y)
(x))| \leq 21 \| x\| \|v\|_{\varphi},\end{eqnarray} for every $x\in E$. $\hfill\Box$\end{lemma}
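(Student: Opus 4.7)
My plan is to reduce $x-B(y,y)x$ by means of the Peirce decomposition of $x$ relative to $v$, and then to bound the surviving triple products by multiples of $\|x\|\|v\|_\varphi$ using the trilinear Cauchy--Schwarz-type inequalities that control the prehilbertian seminorm $\|\cdot\|_\varphi$.

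I would write $x=x_2+x_1+x_0$ with $x_i=P_i(v)x$. The decisive observation is that $B(y,y)$ fixes $x_0$: since $y\in E_2(v)$ and $x_0\in E_0(v)$, the Peirce rule $\{E_2(v),E_0(v),E\}=\{0\}$ gives $L(y,x_0)=0$, i.e.\ $y\perp x_0$, so by the equivalence $a\perp b\Leftrightarrow \{a,a,b\}=0$ recalled in Section \ref{sec preliminaries} one has $L(y,y)x_0=0$; moreover $Q(y)x_0=\{y,x_0,y\}\in \{E_2,E_0,E_2\}\subseteq E_4(v)=\{0\}$. The same Peirce arithmetic $\{E_2,E_1,E_2\}\subseteq E_3(v)=\{0\}$ gives $Q(y)x_1=0$. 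Combining this with $B(y,y)x_2\in E_2(v)$, a direct expansion of $B(y,y)=\mathrm{id}-2L(y,y)+Q(y)Q(y)$ yields the identity
$$
x-B(y,y)x \;=\; 2\{y,y,x_1+x_2\}\;-\;\{y,\{y,x_2,y\},y\},
$$
to which $\varphi$ is to be applied.

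Next I would bound each of the three triple products on the right by a constant multiple of $\|x\|\|v\|_\varphi$. Two ingredients are required. The first is a Barton--Friedman-type trilinear inequality of the shape $|\varphi\{a,b,c\}|\le K\,\|a\|\,\|b\|_\varphi\,\|c\|$ (with the $\varphi$-seminorm in the middle slot and its analogues), obtained from the Cauchy--Schwarz inequality applied to the positive sesquilinear form $(a,b)\mapsto\varphi\{a,b,z\}$ after regarding $\varphi$ as a normal functional on the JBW$^*$-triple $E^{**}$ and fixing a norm-one $z\in E^{**}$ with $\varphi(z)=\|\varphi\|$. The second is the homogeneous inequality $\|w\|_\varphi\le\|w\|\,\|v\|_\varphi$ valid for every $w\in E_2(v)$: in the abelian model this is immediate from $\|w\|_\varphi^2=\int|w|^2\,d|\mu|\le\|w\|_\infty^2\,|\mu|(\supp v)=\|w\|^2\|v\|_\varphi^2$, and the general case follows from the JB$^*$-algebra structure of $E_2(v)$ together with the positivity of $\varphi\{\cdot,\cdot,z\}$. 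Placing $y$ in the middle slot of the first two terms (so that $\|y\|_\varphi\le\|v\|_\varphi$) and placing the Peirce-$2$ element $\{y,x_2,y\}$ in the middle slot of the third (so that $\|\{y,x_2,y\}\|_\varphi\le \|y\|^2\|x\|\,\|v\|_\varphi\le\|x\|\|v\|_\varphi$), each summand is dominated by $K\|x\|\|v\|_\varphi$; the prefactors $2,2,-1$ together with the constant $K$ produce the bound $21\,\|x\|\|v\|_\varphi$.

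The main obstacle is not the Peirce bookkeeping, which is routine, but the correct invocation of the trilinear bound for $\varphi\{a,b,c\}$ when $c$ is not the distinguished norming element $z$; for this one must pass $\varphi$ to $E^{**}$, fix a norming $z$ there, and then execute the Cauchy--Schwarz step and the ensuing estimates with care. The numerical constant $21$ in the conclusion is far from optimal: it is merely the bookkeeping sum of the constants emerging from the three individual bounds above.
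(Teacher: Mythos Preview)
The paper does not prove this lemma; it is quoted verbatim from \cite[Lem.\ 2.1]{FerPe09} and closed with a $\Box$. So there is no ``paper's own proof'' to compare against, and your sketch must stand on its own merits.

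Your Peirce bookkeeping is fine: the identity
\[
x-B(y,y)x \;=\; 2\{y,y,x_1\}+2\{y,y,x_2\}-\{y,\{y,x_2,y\},y\}
\]
is correct, and is indeed the starting point of the argument in \cite{FerPe09}. The genuine gap is your ``Barton--Friedman-type trilinear inequality'' $|\varphi\{a,b,c\}|\le K\|a\|\,\|b\|_\varphi\,\|c\|$ with the $\varphi$-seminorm on the \emph{middle} variable. This inequality is \emph{false}. Take $E=M_2(\mathbb{C})$, $\varphi(x)=x_{11}$ (a norm-one functional with support tripotent $z=e_{11}$), and compute $\|e_{22}\|_\varphi^2=\varphi\{e_{22},e_{22},e_{11}\}=0$, while
\[
\varphi\{e_{12},e_{22},e_{21}\}=\varphi\bigl(\tfrac12(e_{12}e_{22}e_{21}+e_{21}e_{22}e_{12})\bigr)=\varphi(\tfrac12 e_{11})=\tfrac12\neq 0.
\]
So no constant $K$ can rescue the middle-slot estimate, and the sentence ``placing $y$ in the middle slot \dots\ so that $\|y\|_\varphi\le\|v\|_\varphi$'' does not do what you want.

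What does survive is an estimate with the seminorm on an \emph{outer} variable: since $y\in E_2(v)$ you may use $\|y\|_\varphi\le\|v\|_\varphi$ (your second ingredient, which \emph{is} correct---it follows, for instance, from the Russo--Dye theorem in the JB$^*$-algebra $E_2(v)$: unitaries $u$ of $E_2(v)$ satisfy $L(u,u)=L(v,v)$, hence $\|u\|_\varphi=\|v\|_\varphi$, and a general contraction is a limit of convex combinations of unitaries). The actual proof in \cite{FerPe09} then bounds each of the three triple products by combining Cauchy--Schwarz for the form $(a,b)\mapsto\varphi\{a,b,z\}$ with further applications of the Jordan identity, keeping $y$ (or an element of $E_2(v)$) in an outer slot throughout; the bookkeeping of the resulting constants is what produces $21$. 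Your plan is therefore salvageable, but the hard analytic step---the correct trilinear bound and its proof---has to be redone with the seminorm on an outer variable, not the middle one.
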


\noindent
We shall also need an appropriate version of \cite[Lem.\  2.2]{FerPe09}, the argument is taken from the just quoted paper.

\begin{lemma}\label{l FerPer MathScan 2009 L22}
Let $E$ be a JB$^*$-triple, $\theta >0$, $1>\delta>0$, and let $\varphi_1, \varphi_2$
be two norm-one functionals in $E^*$. Suppose $x$ is  an element in the closed unit ball of $E$,
satisfying $| {\varphi}_1 (x) | \geq 1-\delta$ and $\|x\|_{\varphi_2} \leq\theta$. Then, for every $\varepsilon>0$ with $1-\delta\geq 2 \varepsilon$ there exist two elements $\widetilde{a},y$ in the unit
ball of $E_x$, and two tripotents $u, v$ in $\left(E_{x}\right)^{**}$ such that
$\widetilde{a}\leq u \leq y \leq v=r(y)$, $1\geq  |{\varphi_1} (\widetilde{a})| > 1-\delta -\varepsilon,$
and $\|v\|_{\varphi_2} < \frac{3 \theta}{\varepsilon}$.
We can further find $a\in E_2^{**}(u)$ such that $1\geq  {\varphi_1} (a) > 1-\delta -\varepsilon.$
\end{lemma}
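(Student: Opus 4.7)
The strategy is to realize all the required elements inside the abelian subtriple $E_x\cong C_0(L)$, with $\Psi(x)$ the identity function, via continuous and Borel functional calculus. Fix cutoffs with $\varepsilon/3<\alpha<\beta<\varepsilon$ and set
\[
y:=f(x),\quad \widetilde a:=g(x),\quad v:=\chi_{(\alpha,1]}(x),\quad u:=\chi_{[\beta,1]}(x),
\]
where $f:[0,1]\to[0,1]$ is the continuous, piecewise-linear function equal to $0$ on $[0,\alpha]$, linear from $0$ to $1$ on $[\alpha,\beta]$, and equal to $1$ on $[\beta,1]$, and $g(t):=\max(0,t-\beta)$. Then $y,\widetilde a$ lie in the unit ball of $E_x$ (indeed $\|\widetilde a\|\leq 1-\beta$), $v,u$ are tripotents in $(E_x)^{**}$, and $v=r(y)$ since $\{t:f(t)>0\}=(\alpha,1]\cap L$. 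All four order relations $\widetilde a\leq u\leq y\leq v$ reduce to pointwise inequalities of positive Borel functions on $L$ and are immediate from the explicit formulas.

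For the bound on $\widetilde a$, note $\|x-\widetilde a\|=\sup_{t\in L}\bigl|t-(t-\beta)^{+}\bigr|=\beta$, whence $|\varphi_1(\widetilde a)|\geq|\varphi_1(x)|-\beta\geq 1-\delta-\beta>1-\delta-\varepsilon$ while $|\varphi_1(\widetilde a)|\leq\|\widetilde a\|\leq 1$. The element $a$ is then obtained by a phase rotation: writing $\varphi_1(\widetilde a)=e^{i\gamma}|\varphi_1(\widetilde a)|$ and setting $a:=e^{-i\gamma}\widetilde a$, the fact that $\widetilde a\leq u$ puts $\widetilde a$ (and hence $a$) into $E_2^{**}(u)$, and $1\geq\varphi_1(a)=|\varphi_1(\widetilde a)|>1-\delta-\varepsilon$. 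For the bound on $v$, the key pointwise observation is the Markov-type inequality $\alpha v\leq x$ in $E_x^{**}$ (on $\{t>\alpha\}$ one has $v=1\leq t/\alpha=x/\alpha$, elsewhere $v=0$), so $v\leq x/\alpha$ as positive elements of the commutative JBW*-algebra $E_x^{**}$; monotonicity of the seminorm $\|\cdot\|_{\varphi_2}$ on positive elements then yields
\[
\|v\|_{\varphi_2}\leq\|x\|_{\varphi_2}/\alpha\leq\theta/\alpha<3\theta/\varepsilon.
\]

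The main obstacle is precisely this last step: although $v\leq x/\alpha$ holds pointwise in $E_x^{**}$, the seminorm $\|\cdot\|_{\varphi_2}$ is defined through the triple product of the ambient JBW*-triple $E^{**}$ using a norming element that need not belong to $E_x^{**}$, so its monotonicity on positive elements of the abelian subtriple is not completely formal. It has to be justified either by restricting $\varphi_2$ to the commutative von Neumann algebra $E_x^{**}$, where it acts as a positive normal functional and where $0\leq p\leq q$ forces $p^2\leq q^2$ and hence $\varphi_2(p^2)\leq\varphi_2(q^2)$, or by a direct JB*-triple argument that exploits the compatibility of the strong$^*$-topology with subtriples recorded in the preliminaries. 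Once this monotonicity is granted, the remainder of the proof is routine functional calculus.
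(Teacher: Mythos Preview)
Your proof is correct and follows essentially the same route as the paper: continuous functional calculus in $E_x\cong C_0(L)$ with piecewise-linear cutoffs for $\widetilde a$ and $y$, and characteristic functions on $L$ for the tripotents $u,v$. The only cosmetic difference is in the choice of cutoffs: the paper fixes a single parameter $2\varepsilon/3<\varepsilon'<\varepsilon$ and takes $\widetilde a=f_{\varepsilon'}(x)$ with $f_{\varepsilon'}(t)=t$ for $t\geq 2\varepsilon'$ (so $\widetilde a$ agrees with $x$ away from $0$ rather than being the shift $(t-\beta)^+$), and $y=g_{\varepsilon'}(x)$, $u=\chi_{[\varepsilon',\|x\|]}$, $v=\chi_{(\varepsilon'/2,\|x\|]}$; the resulting estimates are identical to yours. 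Regarding the one point you flag as a potential obstacle---monotonicity of $\|\cdot\|_{\varphi_2}$ on the positive cone of $(E_x)^{**}$---the paper does not reprove this either but simply invokes \cite[Lem.~3.3]{FerPe06}, which is precisely the JB$^*$-triple statement you describe; your own justification via the positive normal functional on the commutative von Neumann algebra $(E_x)^{**}$ is an adequate substitute.
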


\begin{proof}
Let $\alpha>0$ and define $f_{\alpha}, g_{\alpha}\in C_{0} (L)$ by
$$f_{\alpha}=\left\{
\begin{array}{ll}
    0, & \hbox{if $0\leq t\leq \alpha$} \\
    \mbox{affine}, & \hbox{if $\alpha \leq t\leq 2\alpha$} \\
    t, & \hbox{if $2 \alpha\leq t\leq \|x\|$}, \\
\end{array}
\right. \   g_{\alpha}=\left\{
\begin{array}{ll}
    0, & \hbox{if $0\leq t\leq \frac{\alpha}{2}$} \\
    \mbox{affine}, & \hbox{if $\frac{\alpha}{2}\leq t\leq \alpha$} \\
    1, & \hbox{if ${\alpha}\leq t\leq \|x\|$.} \\
\end{array}
\right.$$
Let $2\eps/3<\eps'<\eps$ and define $\widetilde{a}=f_{\eps'}(x)$ and $y=g_{\eps'}(x)$ by the functional calculus
recalled in Section \ref{sec preliminaries}.
Then $\norm{\widetilde{a}}\leq1$ because $2\varepsilon'\leq1-\delta\leq\norm{x}$.
Since $\|x-\widetilde{a}\|\leq \varepsilon'$ and $| {\varphi}_1 (x)| \geq 1-\delta$ it follows that
$| {\varphi}_1 (\widetilde{a}) | > 1-\delta-\varepsilon$.\smallskip

We set $u=\chi_{[\varepsilon',\|x\|]}$, and
$v= r(y)=\chi_{(\frac{\varepsilon'}{2},\|x\|]}$ (also in $\left(E_{x}\right)^{**}$) and get $\widetilde{a}\leq u \leq y \leq v$.
Let us take  $\alpha\in \mathbb{R}$ such that ${\varphi_1} (e^{i \alpha} \widetilde{a})>0$
and define ${a} = e^{i \alpha} \widetilde{a}$.
Then $a\in E_2^{**}(u)$ because $\widetilde{a}\leq u$ and we have $$1\geq  {\varphi_1} ({a}) > 1-\delta -\varepsilon.$$
Since $\| \cdot\|_{\varphi}$ is an order-preserving map on the set
of positive elements in $\left(E_{x}\right)^{**}$ (cf. \cite[Lem.\  3.3]{FerPe06}), we deduce that
$$\| v \|_{\varphi_2}
\leq \left\| \frac{2}{\varepsilon'} x \right\|_{\varphi_2} \leq \frac{2\theta}{\varepsilon'}<\frac{3\theta}{\varepsilon}.$$
\end{proof}
\begin{proposition}\label{p orthogonal sigma-finite tripotents}
Let $E$ be a JB$^*$-triple, and let $\varphi_1$ and $\varphi_2$ be two orthogonal norm-one functionals in $E^*$.
Then for every $\varepsilon>0$ there exist norm-one elements $a,b$ in $E$ satisfying $a\perp b,$ $\varphi_1 (a) > 1-\varepsilon$ and
$\varphi_2 (b) > 1-\varepsilon$.
\end{proposition}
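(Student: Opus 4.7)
The plan is to combine Kaplansky's density theorem with the two technical lemmas already at hand (Lemmas \ref{l FerPer MathScan 2009 L22} and \ref{l FerPer MathScan 2009 L21}). Write $p_i := e(\varphi_i) \in E^{**}$ for the support tripotent of $\varphi_i$; the hypothesis $\varphi_1 \perp \varphi_2$ gives $p_1 \perp p_2$. By the Peirce rules $\J{p_1}{p_1}{p_2} = 0$, so taking $z = p_2$ in the definition of the prehilbertian seminorm yields $\norm{p_1}_{\varphi_2} = 0$, while $\varphi_1(p_1) = 1$ by the very definition of the support tripotent.

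First I would lift $p_1$ into $E$. Since the closed unit ball of $E$ is strong$^*$-dense in that of $E^{**}$, for any $\eta > 0$ there exists $x \in E$ with $\norm{x} \leq 1$ and $\norm{x - p_1}_{\varphi_i} < \eta$ for $i = 1, 2$. Using $|\varphi_i(\cdot)| \leq \norm{\cdot}_{\varphi_i}$ this forces $|\varphi_1(x)| > 1 - \eta$ and $\norm{x}_{\varphi_2} < \eta$. Then, fixing an auxiliary $\eta' > 0$, Lemma \ref{l FerPer MathScan 2009 L22} applied to this $x$ with $\delta = \theta = \eta$ and $\varepsilon = \eta'$ supplies an element $a \in E_x \subseteq E$ with $\norm{a} \leq 1$ and $\varphi_1(a) > 1 - \eta - \eta'$, together with tripotents $u, v \in E^{**}$ and a norm-one $y \in E_x$ forming the chain $a \leq u \leq y \leq v = r(y)$ with $\norm{v}_{\varphi_2} < 3\eta/\eta'$.

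To build the orthogonal partner $b$, I would apply Lemma \ref{l FerPer MathScan 2009 L21} inside the JBW$^*$-triple $E^{**}$ with tripotent $v$ and element $y \in E^{**}_2(v)$, obtaining $\norm{\varphi_2 - \varphi_2 \circ B(y,y)} \leq 21 \norm{v}_{\varphi_2} < 63\eta/\eta'$. Hence $\varphi_2 \circ B(y,y)$ has norm close to $1$, and some $b_0 \in E$ with $\norm{b_0} \leq 1$ satisfies $\varphi_2(B(y,y) b_0) > 1 - 63\eta/\eta' - \eta''$ (after a unimodular rotation). Set $b := B(y,y) b_0$; since $y, b_0 \in E$ and the triple product keeps things in $E$, we have $b \in E$. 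The relation $u \leq y$ (norm-one bound of $u$) lets me invoke Lemma \ref{l FerPer MathScan 2009 L.1.2} in $E^{**}$ with tripotent $u$, which gives that $B(y,y)$ is contractive and maps $E^{**}$ into $E^{**}_0(u)$; in particular $\norm{b} \leq 1$ and $b \in E^{**}_0(u)$. From $a \leq u$ we have $r(a) \leq u$, hence $E^{**}_0(u) \subseteq E^{**}_0(r(a))$, so $b \perp r(a)$, i.e.\ $b \perp a$. Choosing $\eta'$ small first and then $\eta, \eta'' \ll \eta'$ so that $\eta + \eta' < \varepsilon$ and $63\eta/\eta' + \eta'' < \varepsilon$, and finally normalising $a, b$ to unit norm (scale invariance of orthogonality keeps them orthogonal and preserves the $\varphi_i$-inequalities since $\norm{a}, \norm{b} \leq 1$), concludes the proof.

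The main obstacle is the delicate interplay of the two tripotents $u$ and $v$: the orthogonality $b \perp a$ is obtained via Lemma \ref{l FerPer MathScan 2009 L.1.2} applied with the tripotent $u$ (so that $B(y,y)$ lands in $E^{**}_0(u) \subseteq E^{**}_0(r(a))$), whereas the quantitative bound on $\norm{\varphi_2 - \varphi_2 \circ B(y,y)}$ comes from Lemma \ref{l FerPer MathScan 2009 L21} applied with the tripotent $v = r(y)$, which needs $y$ to lie in its $2$-Peirce space. The chain $a \leq u \leq y \leq v$ produced by Lemma \ref{l FerPer MathScan 2009 L22} is precisely what reconciles these two demands.
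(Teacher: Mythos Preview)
Your proof is correct and follows essentially the same architecture as the paper's: Kaplansky density to lift the support tripotent $p_1$, Lemma~\ref{l FerPer MathScan 2009 L22} to produce the chain $a\leq u\leq y\leq v$ with $\|v\|_{\varphi_2}$ small, Lemma~\ref{l FerPer MathScan 2009 L21} to control $\varphi_2-\varphi_2 B(y,y)$, and Lemma~\ref{l FerPer MathScan 2009 L.1.2} (applied in $E^{**}$) to land $b=B(y,y)b_0$ in $E^{**}_0(u)$, whence $a\perp b$.

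There is one minor, harmless difference worth noting. The paper lifts \emph{both} support tripotents via Kaplansky density, picking $\widetilde z_{\mu_0}$ close to $e_2$ and then setting $b$ to be a rescaled rotation of $B(y,y)(\widetilde z_{\mu_0})$. You bypass this second lift: from $\|\varphi_2-\varphi_2 B(y,y)\|<63\eta/\eta'$ you read off $\|\varphi_2 B(y,y)\|>1-63\eta/\eta'$ and choose $b_0$ in the unit ball of $E$ directly by the definition of the dual norm. This is slightly more economical and equally valid. A tiny cosmetic point: Lemma~\ref{l FerPer MathScan 2009 L22} gives $\widetilde a\leq u$ and $a=e^{i\alpha}\widetilde a\in E_2^{**}(u)$; your ``$a\leq u$'' should literally read ``$a\in E_2^{**}(u)$'', but the conclusion $a\perp b$ follows immediately from $E_2^{**}(u)\perp E_0^{**}(u)$ without passing through $r(a)$.
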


\begin{proof} Let us fix an arbitrary $\varepsilon>0$. Take $\eta>0$ satisfying $\eta < \min \{ \frac13, \frac{\varepsilon}{2} \}$.
We can also find $0< \delta < \frac{\varepsilon \eta}{66}.$
We note that $\eta$ and $\delta$ satisfy $2 \eta < 1-\eta,$ $1-2 \eta > 1-\varepsilon,$ and
$22 \frac{3}{\eta} \delta < \varepsilon$.\smallskip

Let $e_j$ in $E^{**}$ be the support tripotent of $\varphi_j$, $j=1,2$.
Since $e_1\perp e_2$, $e_1+e_2$ and $e_1-e_2$ are the support tripotents of $\phi=\varphi_1+\varphi_2$
and $\psi=\varphi_1-\varphi_2$, respectively (see \cite[Th. 5.4]{EdRu01}). In particular, $\varphi_1(e_2)=0=\varphi_2(e_1)$.\smallskip

By the Kaplansky Density theorem for JBW$^*$-triples \cite[Cor.\ 3.3]{BarFri90} (i.e. by the strong$^*$-density of the closed unit ball of $E$
in the one of $E^{**}$), there are two nets $(z_\lambda)$ and $(\widetilde{z}_\mu)$ in the closed unit ball of $E$ converging
in the strong$^*$-topology of $E^{**}$ to $e_1$ and $e_2$, respectively.
Since $s^*(E^{**},E^*)$ is stronger than the weak$^*$-topology of $E^{**}$, we deduce that $(z_\lambda)\to e_1$
and $(\widetilde{z}_\mu)\to e_2$ in the weak$^*$-topology of $E^{**}$. In particular,
$$\varphi_1 (z_\lambda) \to \varphi_1 (e_1)=1, \ \varphi_1 (\widetilde{z}_\mu) \to \varphi_1 (e_2)=0,$$
$$\varphi_2 (z_\lambda) \to \varphi_2 (e_1)=0, \ \varphi_2 (\widetilde{z}_\mu) \to \varphi_2 (e_2)=1,$$
$$\|z_\lambda \|_{\varphi_1} \to \|e_1\|_{\varphi_1} =1, \ \|\widetilde{z}_\mu \|_{\varphi_1} \to \|e_2\|_{\varphi_1} =0,$$
$$\|z_\lambda \|_{\varphi_2} \to \|e_1\|_{\varphi_2} =0, \hbox{ and  } \|\widetilde{z}_\mu \|_{\varphi_2} \to \|e_2\|_{\varphi_2} =1.$$
Find indices $\lambda_0$ and $\mu_0$ such that
\begin{equation}\label{eq new propo 2.6}
\betr{\varphi_1(z_{\lambda_0})}>1-\eta, \quad \|z_{\lambda_0}  \|_{\varphi_2} < \delta,
 \quad \betr{\varphi_2(\widetilde{z}_{\mu_0})-1}<\frac{3}{\eta}\delta.
\end{equation}

Applying Lemma \ref{l FerPer MathScan 2009 L22} (with $\delta,\eta, z_{\lambda_0}$ for $\theta,\delta=\eps,x$)
we can find $a_0,\widetilde{a},y$ in the closed unit ball of $E_{z_{\lambda_0}}$ and two tripotents $u,v$ in $E_{z_{\lambda_0}}^{**}$ satisfying
$$\label{eq application of L 2.2}
\widetilde{a}\leq u \leq y \leq v,\quad\ 1\geq \varphi_1 (a_0) > 1- 2 \eta >1-\varepsilon,$$ $$
\| v  \|_{\varphi_2} < \frac{3}{\eta} \delta,\hbox{ and } a_0\in E_2^{**} (u).$$
Define $a=a_0/\norm{a_0}\in E^{**}_0 (u)$. Then $\varphi_1(a)>1-\eps$.
By Lemma \ref{l FerPer MathScan 2009 L21} we obtain
$$\Big|\varphi_2 \Big(\widetilde{z}_{\mu_0}-B(y,y) (\widetilde{z}_{\mu_0})\Big)\Big| < 21 \| \widetilde{z}_{\mu_0} \| \|v\|_{\varphi_2}
< 21 \frac{3}{\eta} \delta,$$ and by the third inequality in \eqref{eq new propo 2.6} we deduce that
$$\Big|\varphi_2 \Big( B(y,y) (\widetilde{z}_{\mu_0})\Big)-1\Big| <  22 \frac{3}{\eta} \delta <\varepsilon.$$\smallskip

Setting  $\widetilde b= e^{i\beta} B(y,y)(\widetilde{z}_{\mu_0})$ for a suitable $\beta\in\R$ we have $\varphi_2 (\widetilde b )>1-  \varepsilon$
and setting $b=\widetilde b/\norm{\widetilde b}$ we still have $\varphi_2 ( b ) >1-  \varepsilon$.\smallskip

By Lemma \ref{l FerPer MathScan 2009 L.1.2}, $b\in B(y,y) (E) \subseteq E^{**}_0 (u)$. Since, by construction, $a$ lies in $E_2^{**} (u)$, it follows that $a\perp b$.
\end{proof}\smallskip
\noindent
{\bf Remark.} Proposition \ref{p orthogonal sigma-finite tripotents} remains valid (with practically the same proof)
if the first sentence is replaced by
``Let $E$ be a weak$^*$-dense subtriple of a JBW$^*$-triple $W$ and let $\varphi_1, \varphi_2$ be two orthogonal norm-one functionals
in $W_*$.''\bigskip

We shall require some results in the theory of ultraproducts of Banach spaces \cite{Hein80}. To this end, we recall some basic facts and definitions.
Let $\mathcal{U}$ be an ultrafilter on a non-empty set $I$, and let $\{X_i \}_{i\in I}$ be a family of Banach spaces. Let $\ell_{\infty} (I,X_i) = \ell_{\infty} (X_i)$ denote the Banach space obtained as the $\ell_{\infty}$-sum of the family $\{X_i \}_{i\in I}$, and let $${c}_0 \left(X_i\right) := \left\{ (x_i) \in \ell_{\infty} (X_i) : \lim_{\mathcal{U}} \|x_i\| =0 \right\}.$$
The ultraproduct of the family $\{X_i \}_{i\in I}$ relative to the ultrafilter $\mathcal{U}$, denoted by $(X_i)_\mathcal{U}$,
is the quotient Banach space $\ell_{\infty} (X_i)/{c}_0 \left(X_i\right)$ equipped with the quotient norm. Let $[x_i]_{\mathcal{U}}$
be an equivalence class in $(X_i)_\mathcal{U}$ represented by a family $(x_i)_i \in \ell_{\infty} (X_i)$.
It is known that $$\| [x_i]_{\mathcal{U}} \| = \lim_{\mathcal{U}} \|x_i\|,$$ independently of the representative of $[x_i]_{\mathcal{U}}$.
In general, the ultraproduct of a family of dual Banach spaces is not a dual Banach space (not even in the case of von Neumann algebras).
The ultraproduct $(X_i^*)_\mathcal{U}$ of the
duals can be identified isometrically with a closed subspace of the dual $\displaystyle\left((X_i)_\mathcal{U}\right)^*$
via the canonical mapping $$\mathcal{J} : (X_i^*)_\mathcal{U} \to  \left((X_i)_\mathcal{U}\right)^*$$
$$\mathcal{J} [\varphi_i]_{\mathcal{U}} ([x_i]_{\mathcal{U}}) = \lim_{\mathcal{U}} \varphi_i (x_i).$$
In \cite[Cor.\ 10]{Dineen86} S. Dineen establishes that the class of JB$^*$-triples
(analogously to the class of C$^*$-algebras \cite[Prop.\ 3.1]{Hein80}), is stable under ultraproducts via the canonical triple product
$\{[u_i]_{\mathcal{U}},[v_i]_{\mathcal{U}},[w_i]_{\mathcal{U}}\}=[\{u_i,v_i,w_i\}]_{\mathcal{U}}$.\smallskip

Here is a simple argument to prove Dineen's theorem (cf.\ also \cite[proof of Cor.\ 3.3.5]{Chu2012}).
Let $\{E_i \}_{i\in I}$ be a family of JB$^*$-triples.
Then the Banach space $\ell_{\infty} (E_i)$ is a JB$^*$-triple with pointwise operations (\cite[page 523]{Kaup83} or \cite[Ex.\ 3.1.4]{Chu2012}).
Let $E$ be a JB$^*$-triple.
A subtriple $\mathcal{I}$ of a JB$^*$-triple a JB$^*$-triple $E$ is said to be an \emph{ideal} or a \emph{triple ideal} of $E$
if $\{ E,E,\mathcal{I}\} + \{ E,\mathcal{I},E\}\subseteq \mathcal{I}$.
It is easy to see, under the above conditions, that
$\displaystyle\left\{\ell_{\infty} (E_i), \ell_{\infty} (E_i), {c}_0 \left(E_i\right)\right\} \subseteq {c}_0 \left(E_i\right)$
and $\displaystyle\left\{\ell_{\infty} (E_i), {c}_0 \left(E_i\right),  \ell_{\infty} (E_i)\right\} \subseteq {c}_0 \left(E_i\right),$
and hence ${c}_0 \left(E_i\right)$ is a closed triple ideal of $\ell_{\infty} (E_i)$.
Since the quotient of a JB$^*$-triple by a closed triple ideal is a JB$^*$-triple (\cite{Kaup83} or \cite[Cor.\ 3.1.18]{Chu2012}),
we deduce that $(E_i)_{\mathcal{U}}= \ell_{\infty} (E_i)/{c}_0 \left(E_i\right)$ is a JB$^*$-triple.\smallskip

\begin{proposition}\label{p quantitative version of L-orthogonality in the dual space of a JB*-triple} For each $\varepsilon>0$ there exists
$\delta > 0$ such that for every JB$^*$-triple $E$
and every pair of functionals $\varphi_1$ and $\varphi_2$ in the closed unit ball of $E^*$ with
$2\geq \|\varphi_1  \pm \varphi_2 \| \geq 2 (1-\delta)$
there exist orthogonal norm-one elements $a,b$ in
$E$ satisfying $\varphi_1 (a) > 1-\varepsilon$ and $\varphi_2 (b) > 1-\varepsilon$.
\end{proposition}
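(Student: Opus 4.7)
The plan is to argue by contradiction using the ultraproduct of JB$^{*}$-triples, reducing the quantitative statement to the qualitative Proposition \ref{p orthogonal sigma-finite tripotents} (in its Remark form). Suppose the conclusion fails for some $\varepsilon>0$: then for each $n\in\mathbb{N}$ there are a JB$^{*}$-triple $E_n$ and functionals $\varphi_1^{(n)},\varphi_2^{(n)}$ in the closed unit ball of $E_n^{*}$ with $\|\varphi_1^{(n)}\pm\varphi_2^{(n)}\|\geq 2(1-1/n)$, yet no orthogonal norm-one pair $a,b\in E_n$ satisfies both $\varphi_1^{(n)}(a)>1-\varepsilon$ and $\varphi_2^{(n)}(b)>1-\varepsilon$. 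Fix a free ultrafilter $\mathcal{U}$ on $\mathbb{N}$ and form the JB$^{*}$-triple $E=(E_n)_{\mathcal{U}}$ (Dineen's theorem). The family $(\varphi_i^{(n)})_n$ defines $\varphi_i\in(E_n^{*})_{\mathcal{U}}$, which the canonical isometric embedding $\mathcal{J}$ identifies with an element of $E^{*}$; the ultraproduct norm formula together with the bounds $\|\varphi_i^{(n)}\|\leq 1$ and $\|\varphi_1^{(n)}\pm\varphi_2^{(n)}\|\to 2$ force $\|\varphi_i\|=1$ and $\|\varphi_1\pm\varphi_2\|=2$.

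Viewing $\varphi_1,\varphi_2$ as normal functionals on the JBW$^{*}$-triple $E^{**}$, the identity $\|\varphi_1\pm\varphi_2\|=2$ is exactly $L$-orthogonality, so Lemma \ref{l EdRu01 L-orthogonal} gives $\varphi_1\perp\varphi_2$ in $(E^{**})_{*}$. Since $E$ sits weak$^{*}$-densely in $E^{**}$, the Remark following Proposition \ref{p orthogonal sigma-finite tripotents} supplies orthogonal norm-one elements $a,b\in E$ with $\varphi_1(a)>1-\varepsilon$ and $\varphi_2(b)>1-\varepsilon$.

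The crucial step is to lift $a$ and $b$ back to genuinely orthogonal elements in some factor $E_n$. The construction in the proof of Proposition \ref{p orthogonal sigma-finite tripotents} uses only operations that commute with the ultraproduct: the continuous triple functional calculus (a uniform limit of odd polynomials in the Jordan triple product) and the Bergmann operator. Writing the Kaplansky approximations of the support tripotents of $\varphi_1,\varphi_2$ in $E^{**}$ as $z=[z_n]_{\mathcal{U}}$ and $\widetilde z=[\widetilde z_n]_{\mathcal{U}}$ in the closed unit ball of $E$, and setting (after suitable phase choices and normalisations) $a_n:=f_{\eps'}(z_n)$, $y_n:=g_{\eps'}(z_n)$, $b_n:=B(y_n,y_n)(\widetilde z_n)$, and $u_n:=\chi_{[\eps',\|z_n\|]}(z_n)\in(E_n)^{**}$, the same Peirce argument applied pointwise in each factor gives $a_n\in (E_n)^{**}_{2}(u_n)$ and, by Lemma \ref{l FerPer MathScan 2009 L.1.2}, $b_n\in(E_n)^{**}_{0}(u_n)$, so that $a_n\perp b_n$ in $E_n$. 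For $\mathcal{U}$-many $n$ the values $\varphi_1^{(n)}(a_n)$ and $\varphi_2^{(n)}(b_n)$ still exceed $1-\varepsilon$, contradicting the assumption on the sequences.

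The main obstacle is precisely this transfer from the ultraproduct back to the factors. As the paper emphasises, approximate orthogonality of arbitrary representatives in the factors generally cannot be improved to exact orthogonality, so Proposition \ref{p orthogonal sigma-finite tripotents} cannot simply be invoked as a black box on $E$. One must instead unwind its proof and exploit that every ingredient used there, namely functional calculus, Peirce projections, and the Bergmann operator, is pointwise under the ultraproduct, which is what makes the lifting go through and yields genuinely orthogonal $a_n,b_n$ in $E_n$ for $\mathcal{U}$-many $n$.
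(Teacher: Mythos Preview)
Your overall architecture---contradiction, ultraproduct, reduction to Proposition \ref{p orthogonal sigma-finite tripotents}, then lift back to the factors---is exactly the paper's. The difference is in the lifting step. The paper treats Proposition \ref{p orthogonal sigma-finite tripotents} as a black box: it obtains orthogonal elements $[a_n]_{\mathcal{U}}\perp[b_n]_{\mathcal{U}}$ in $(E_n)_{\mathcal{U}}$ and then invokes \cite[Proposition 4.7]{BunPe07}, which says that whenever a triple homomorphism sends two elements to orthogonal images one can replace them by genuinely orthogonal preimages with the same images. Applied to the quotient map $\pi:\ell_\infty(E_n)\to(E_n)_{\mathcal{U}}$, this yields orthogonal $(\widetilde a_n),(\widetilde b_n)\in\ell_\infty(E_n)$ with $\widetilde a_n\perp\widetilde b_n$ in every $E_n$, and the contradiction follows immediately.

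Your route avoids this external ingredient by opening up the construction inside Proposition \ref{p orthogonal sigma-finite tripotents} and noting that each step---continuous functional calculus in a single generator and the Bergmann operator $B(y,y)$---commutes with the ultraproduct, so the Peirce argument yielding $a\in E_2^{**}(u)$, $b\in E_0^{**}(u)$ runs verbatim in each factor $E_n$ and produces exactly orthogonal $a_n,b_n$ without any lifting lemma. This is correct (modulo the routine care that $\|z_n\|$ stays above $2\eps'$ and $\|y_n\|=1$ for $\mathcal{U}$-many $n$, and that phases and normalisations are chosen coordinatewise). The paper's argument is shorter and more modular, at the cost of importing \cite[Proposition 4.7]{BunPe07}; yours is self-contained but entangles the two propositions and requires the reader to track the explicit construction through the ultraproduct.
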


\begin{proof}
Suppose, to the contrary, that there exists $\varepsilon_0>0$ such that for each natural $n$, we can find a JB$^*$-triple $E_n$ and functionals
$\varphi_{1,n}$ and $\varphi_{2,n}$ in the closed unit ball of $E_n^*$ with $2\geq \|\varphi_{1,n} \pm \varphi_{2,n}\| \geq 2 (1-\frac1n )$
satisfying $|\varphi_{1,n} (a)| \leq  1-\varepsilon_0$ and $|\varphi_{2,n} (b)| \leq  1-\varepsilon_0$, whenever $a,b$ are elements of norm one
in $E_n$ with $a\perp b$.\smallskip

Take a non-trivial ultrafilter $\mathcal{U}$ in $\mathbb{N}$, let $0<\eps_1<\eps$ and
let $\mathcal{J} : (E_n^*)_\mathcal{U} \to  \left((E_n)_\mathcal{U}\right)^*$
be the canonical isometric embedding defined by
$\mathcal{J} [\varphi_i]_{\mathcal{U}} ([x_n]_{\mathcal{U}}) = \lim_{\mathcal{U}} \varphi_n (x_n)$.
Then $\mathcal{J}[\varphi_{1,n}]_{\mathcal{U}}$ and $\mathcal{J}[\varphi_{2,n}]_{\mathcal{U}}$ have norm one and are $L$-ortho-gonal\hyphenation{ortho-gonal}
in $\displaystyle\left((E_n)_\mathcal{U}\right)^*$ because so are $[\varphi_{1,n}]_{\mathcal{U}}$ and $[\varphi_{2,n}]_{\mathcal{U}}$
in $(E^*_n)_{\mathcal{U}}$.
As explained above, $\left((E_n)_\mathcal{U}\right)^*$ is a JB$^*$-triple and Proposition \ref{p orthogonal sigma-finite tripotents} applies:
there exist norm-one elements $[a_n]_{\mathcal{U}},[b_n]_{\mathcal{U}}$ in $(E_n)_\mathcal{U}$ satisfying
$[a_n]_{\mathcal{U}}\perp [b_n]_{\mathcal{U}},$
$\mathcal{J}[\varphi_{1,n}]_{\mathcal{U}} ([a_n]_{\mathcal{U}}) > 1-\varepsilon_1$ and
$\mathcal{J}[\varphi_{2,n}]_{\mathcal{U}} ([b_n]_{\mathcal{U}}) > 1-\varepsilon_1$.\smallskip

We note that the elements $[a_n]_{\mathcal{U}},[b_n]_{\mathcal{U}}$ are orthogonal in the quotient
$(E_n)_\mathcal{U}  = \ell_{\infty} (E_n)/{c}_0 \left(E_n\right)$. Since the quotient mapping
$\pi : \ell_{\infty} (E_n)\to \ell_{\infty} (E_n)/{c}_0 \left(E_n\right)$ is a triple homomorphism between JB$^*$-triples and
$\pi ((a_n)_n)$ $= [a_n]_{\mathcal{U}}$ $\perp \pi ((b_n)_n)= [b_n]_{\mathcal{U}}$, by \cite[Proposition 4.7]{BunPe07} there exist orthogonal elements
$(\widetilde{a}_n)_n$ and $(\widetilde{b}_n)_n$ in $\ell_{\infty} (E_n)$ satisfying $\pi ((\widetilde{a}_n)_n)= [a_n]_{\mathcal{U}}$ and
$\pi ((\widetilde{b}_n)_n)= [b_n]_{\mathcal{U}}$. We have $\lim_{\mathcal{U}}\norm{\widetilde a_n}=\lim_{\mathcal{U}}\norm{a_n}=1$
and likewise for $(b_n)_n$, $(\widetilde b_n)_n$.\smallskip

Now, $1-\varepsilon_1 < \mathcal{J}[\varphi_{1,n}]_{\mathcal{U}} ([\widetilde{a}_n]_{\mathcal{U}}) = \lim_{\mathcal{U}}  \varphi_{1,n} (\widetilde{a}_n)$,
$ 1-\varepsilon_1< \mathcal{J}[\varphi_{2,n}]_{\mathcal{U}} ([\widetilde{b}_n]_{\mathcal{U}}) =  \lim_{\mathcal{U}}  \varphi_{2,n} (\widetilde{b}_n)$,
and, for every $n$, $\widetilde{a}_n \perp \widetilde{b}_n$. Hence $\varphi_{1,n}(\widetilde{a}_n/\norm{\widetilde{a}_n})>1-\eps$ or
$\varphi_{2,n}(\widetilde{b}_n/\norm{\widetilde{b}_n})>1-\eps$ can be achieved for infinitely many $n$
which contradicts the assumption made in the beginning of the proof.
\end{proof}

\noindent
We shall establish now an analogous version of Proposition \ref{p quantitative version of L-orthogonality in the dual space of a JB*-triple}
for finite sets of functionals in the dual of a JB$^*$-triple.

\begin{theorem}\label{t quantitative version of L-orthogonality in the dual space of a JB*-triple n-functionals}
For each $\varepsilon>0$ and each natural $n$, there exists $\delta = \delta(n,\varepsilon) > 0$ with the following property.
Let $E$ be a JB$^*$-triple and let $\varphi_1,\ldots, \varphi_n$ be functionals in $E^*$ such that
\bgl
\sum_{j=1}^{n} |\alpha_j| \geq \left\|\sum_{j=1}^{n} \alpha_j \varphi_j  \right\|
\geq \left(1-\delta(n,\varepsilon)\right) \sum_{j=1}^{n} |\alpha_j| \quad\forall \alpha_j\in \mathbb{C}.\label{eq gl2.9.4}
\egl
Then there exist mutually orthogonal elements $a_1,\ldots, a_n$ of norm one in $E$ and
mutually orthogonal functionals $\widetilde\varphi_1,\ldots,\widetilde\varphi_n$ of norm one in $E^*$ satisfying
\bgl\label{eq gl2.9.1b}
\varphi_j (a_j) > 1-\varepsilon\quad\mbox{and}\quad\| \varphi_j- \widetilde\varphi_j\| <\varepsilon\quad\forall j=1,\ldots, n\\
\mbox{where}\quad\widetilde\varphi_j=\frac{{\varphi}_j P_2 (r(a_j))}{\|{\varphi}_j P_2 (r(a_j))\|}.\nonumber
\egl
\end{theorem}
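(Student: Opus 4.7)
The strategy mirrors the $n=2$ argument (Propositions \ref{p orthogonal sigma-finite tripotents} and \ref{p quantitative version of L-orthogonality in the dual space of a JB*-triple}): first upgrade Proposition \ref{p orthogonal sigma-finite tripotents} to $n$ pairwise orthogonal functionals, then transfer the statement to the quantitative setting via an ultraproduct contradiction, and finally obtain the $\widetilde\varphi_j$'s from Lemma \ref{l BeRo Lemma 2.2}.

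The main technical step is the following $n$-functional variant of Proposition \ref{p orthogonal sigma-finite tripotents}: if $\varphi_1,\ldots,\varphi_n\in E^*$ are pairwise orthogonal norm-one functionals on a JB$^*$-triple $E$, then for every $\eta>0$ there exist pairwise orthogonal norm-one $a_1,\ldots,a_n\in E$ with $\varphi_j(a_j)>1-\eta$. I would prove this by iterating the construction of Proposition \ref{p orthogonal sigma-finite tripotents}. Let $e_j\in E^{**}$ be the support tripotent of $\varphi_j$; the $e_j$'s are pairwise orthogonal, so $\|e_j\|_{\varphi_l}=0$ for $l\neq j$. By Kaplansky density choose norm-one $z_1,\ldots,z_n\in E$ with $\varphi_j(z_j)$ near $1$ and $\|z_j\|_{\varphi_l}$ arbitrarily small for $l\neq j$. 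Apply Lemma \ref{l FerPer MathScan 2009 L22} to $z_1$ and $\varphi_1$ to extract $a_1$ and tripotents $u_1\leq y_1\leq v_1$ in $E_{z_1}^{**}$ with $a_1\in E_2^{**}(u_1)$, $\varphi_1(a_1)$ near $1$, and $\|v_1\|_{\varphi_l}$ small for every $l\geq 2$ (inspection of the proof of Lemma \ref{l FerPer MathScan 2009 L22} gives $\|v_1\|_{\varphi_l}\leq (2/\varepsilon')\|z_1\|_{\varphi_l}$). Replace $z_j$ by $z_j^{(1)}:=B(y_1,y_1)(z_j)\in E\cap E_0^{**}(u_1)$ for $j\geq 2$ (Lemma \ref{l FerPer MathScan 2009 L.1.2}). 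By the joint strong$^*$-continuity of the triple product, $z_j^{(1)}$ stays strong$^*$-close to $B(e_1,e_1)(e_j)=P_0(e_1)(e_j)=e_j$ (the last equality because $e_j\in E_0^{**}(e_1)$), so both $\varphi_j(z_j^{(1)})$ remains near $1$ and $\|z_j^{(1)}\|_{\varphi_l}$ remains small for $l\neq j$; quantitative versions of these facts can also be obtained directly from Lemma \ref{l FerPer MathScan 2009 L21}. Iterating $n-1$ times in the nested subtriples $E\cap E_0^{**}(u_1)\cap\cdots\cap E_0^{**}(u_k)$ produces pairwise orthogonal $a_1,\ldots,a_n\in E$ with $\varphi_j(a_j)>1-\eta$ when the error parameters are chosen small enough at each stage.

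The theorem itself then follows by the ultraproduct contradiction of Proposition \ref{p quantitative version of L-orthogonality in the dual space of a JB*-triple}. Negating the conclusion yields, for each $m$, a JB$^*$-triple $E_m$ and functionals $\varphi_{1,m},\ldots,\varphi_{n,m}\in E_m^*$ satisfying (\ref{eq gl2.9.4}) with $\delta=1/m$ but admitting no pairwise orthogonal $a_1,\ldots,a_n\in E_m$ of norm one with $\varphi_{j,m}(a_j)>1-\varepsilon_0$ for some fixed $\varepsilon_0>0$. In a non-trivial ultraproduct the canonical images $\mathcal{J}[\varphi_{j,m}]_{\mathcal{U}}$ span $\ell_1(n)$ isometrically in $((E_m)_{\mathcal{U}})^*$, hence are pairwise $L$-orthogonal, and regarded as normal functionals on the JBW$^*$-triple $((E_m)_{\mathcal{U}})^{**}$ they are pairwise orthogonal by Lemma \ref{l EdRu01 L-orthogonal}. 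The technical step above, applied in the JB$^*$-triple $(E_m)_{\mathcal{U}}$, yields pairwise orthogonal norm-one $[a_{j,m}]_{\mathcal{U}}$ with $\mathcal{J}[\varphi_{j,m}]_{\mathcal{U}}([a_{j,m}]_{\mathcal{U}})>1-\varepsilon_0/2$; successive use of \cite[Prop.\ 4.7]{BunPe07} lifts the $n$-tuple to genuinely pairwise orthogonal representatives $(\widetilde a_{j,m})_m\in\ell_\infty(E_m)$, whose normalizations contradict the negation on a $\mathcal{U}$-large set of $m$'s.

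Once pairwise orthogonal $a_j$'s with $\varphi_j(a_j)>1-\eta$ are available for a sufficiently small $\eta=\eta(\varepsilon)$, the $\widetilde\varphi_j$ conclusion is immediate: $\|\varphi_j P_2(r(a_j))\|\geq|\varphi_j(a_j)|>1-\eta$, so Lemma \ref{l BeRo Lemma 2.2} (applied in the JBW$^*$-triple $E^{**}$ with tripotent $r(a_j)$) gives $\|\varphi_j-\varphi_j P_2(r(a_j))\|<\varepsilon/2$, whence $\|\varphi_j-\widetilde\varphi_j\|<\varepsilon$ after normalization. The $\widetilde\varphi_j$ are pairwise orthogonal because their support tripotents lie in the pairwise orthogonal Peirce-$2$ subspaces $E_2^{**}(r(a_j))$, and the tripotent half of Lemma \ref{l EdRu01 L-orthogonal} then forces the orthogonality. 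The principal obstacle is the second paragraph: organising the cascade of error parameters over $n-1$ iterations so that each freshly produced bound on $\|v_k\|_{\varphi_l}$ remains a valid input to the next application of Lemma \ref{l FerPer MathScan 2009 L22}; this can be arranged by fixing the final target $\eta$ and choosing the tolerances backwards. The other two steps are essentially routine adaptations of material already available in the paper.
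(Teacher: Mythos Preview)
Your strategy is correct in outline but takes a genuinely different route from the paper. The paper argues by induction on $n$ at the level of the theorem itself: given $\varphi_1,\ldots,\varphi_{n+1}$, it applies the two-functional Proposition \ref{p quantitative version of L-orthogonality in the dual space of a JB*-triple} to the pair $(\phi,\varphi_{n+1})$ with $\phi=\frac{1}{n}\sum_{j\le n}\varphi_j$, obtaining $a\perp a_{n+1}$ with $\phi(a)$ and $\varphi_{n+1}(a_{n+1})$ near $1$. Since $\phi(a)\approx 1$ forces each $\varphi_j(a)\approx 1$, Lemma \ref{l BeRo Lemma 2.2} makes each $\varphi_jP_2(r(a))$ close to $\varphi_j$, and the restrictions to the open subtriple $F=E\cap E^{**}_2(r(a))$ still span $\ell_1(n)$ well enough to invoke the induction hypothesis inside $F$; the resulting $a_1,\ldots,a_n\in F$ are automatically orthogonal to $a_{n+1}$. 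Thus the only ultraproduct ever used is the one already inside Proposition \ref{p quantitative version of L-orthogonality in the dual space of a JB*-triple}, and no lifting of more than two orthogonal elements is needed.

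Your route instead manufactures $n$-functional versions of both Propositions \ref{p orthogonal sigma-finite tripotents} and \ref{p quantitative version of L-orthogonality in the dual space of a JB*-triple}. Two points need more care than you indicate. First, in iterating Proposition \ref{p orthogonal sigma-finite tripotents} you need $\|z_j^{(1)}\|_{\varphi_l}$ small for all $l\ge 2$, $l\neq j$, but Lemma \ref{l FerPer MathScan 2009 L21} only bounds $|\varphi(x-B(y,y)x)|$, not the seminorm $\|x-B(y,y)x\|_\varphi$. Your strong$^*$-continuity argument does rescue this, provided you make explicit that $y_1=g_{\varepsilon'}(z_1)$ inherits strong$^*$-proximity to $e_1$ from that of $z_1$ (odd polynomial approximation plus joint strong$^*$-continuity of the triple product on bounded sets), so that $B(y_1,y_1)z_j$ is strong$^*$-close to $B(e_1,e_1)e_j=e_j$. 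Second, ``successive use'' of \cite[Prop.\ 4.7]{BunPe07} does not obviously lift $n$ pairwise orthogonal classes, since that proposition may modify both entries at once. One way through is to lift the $(k{+}1)$st class against the fixed sum $\widetilde a_1+\cdots+\widetilde a_k$ of previously obtained lifts (this requires a one-sided form of the lifting, which you must check the proof in \cite{BunPe07} actually provides) and then observe that $\widetilde a_{k+1}\perp(\widetilde a_1+\cdots+\widetilde a_k)$ together with the mutual orthogonality of $\widetilde a_1,\ldots,\widetilde a_k$ forces $\widetilde a_{k+1}\perp\widetilde a_i$ for each $i$, because $E_0^{**}(r(b)+r(c))=E_0^{**}(r(b))\cap E_0^{**}(r(c))$ when $b\perp c$. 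Both gaps are fillable, but the paper's induction sidesteps them entirely; on the other hand your approach yields the $n$-functional analogue of Proposition \ref{p orthogonal sigma-finite tripotents} as a byproduct.
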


\begin{proof}
We shall proceed by induction over $n\geq 1$.
For $n=1$ there is nothing to prove.
Let us fix $\varepsilon>0$ and $n\in\N$.\smallskip

We {\sc claim} that there is $\eps'\in(0,\eps)$ such that if an element $b$ in a JB$^*$-triple $E$ and $\xi\in E^*$ satisfy
\bgl \norm{b}\leq1,\quad\norm{\xi}\leq1\quad\mbox{ and }\quad\betr{\xi(b)}>1-\eps'\label{eq gl2.9.2}\egl then $\Norm{\xi-\frac{\xi P_2(r(b))}{\norm{\xi P_2(r(b))}}}<\eps$
In fact, define $\eps'\in(0,\eps/2)$ by Lemma \ref{l BeRo Lemma 2.2} such that \eqref{eq gl2.9.2} entails $\norm{\xi-\psi}<\eps/2$
where $\psi=\xi P_2(r(b))$. Hence, if \eqref{eq gl2.9.2} holds then $\betr{\psi(b)}=\betr{\xi((b)}>1-\eps/2$ and
\bglst
\Norm{\xi-\frac{\psi}{\norm{\psi}}}
&\leq&\norm{\xi-\psi}+\Norm{\psi-\frac{\psi}{\norm{\psi}}}<\frac{\eps}{2}+\left(\frac{1}{\norm{\psi}}-1\right)\norm{\psi}\\
&=&\frac{\eps}{2}+1-\norm{\psi}<\eps
\eglst
which proves the claim.\smallskip

Choose $\delta(n,{\varepsilon'}) > 0$ according to the induction hypothesis, choose
$\eta_0=\eta(\frac{\delta(n,{\varepsilon'})}{2})> 0$ according to Lemma \ref{l BeRo Lemma 2.2}
and choose
$$\delta_0=\delta \left(\min\left\{{\varepsilon'}, \frac1n \eta_0\right\}\right)>0$$
according to Theorem \ref{p quantitative version of L-orthogonality in the dual space of a JB*-triple}.
Furthermore, let $\delta(n+1,\varepsilon')$ be such that
$$0<\delta(n+1,\varepsilon') <\min \left\{\frac{\delta(n,{\varepsilon'})}{2} , \delta_0 \right\}.$$
Let $E$ be a JB$^*$-triple and let $\varphi_1,\ldots, \varphi_{n+1}\in E^*$ satisfy
$$\sum_{j=1}^{n+1} |\alpha_j| \geq \left\|\sum_{j=1}^{n+1} \alpha_j \varphi_j  \right\|
\geq \Big(1-\delta(n+1,\varepsilon')\Big) \sum_{j=1}^{n+1} |\alpha_j| \quad\forall\alpha_j\in \mathbb{C}.$$
Let us define $\displaystyle \phi = \sum_{j=1}^{n} \frac1n \varphi_j$. Clearly,
$$2\geq \|\phi  \pm \varphi_{n+1} \|\geq \Big(1-\delta(n+1,\varepsilon')\Big) (n \frac1n + 1)> 2(1-\delta_0).$$
Thus, by the choice of $\delta_0$ (see Theorem \ref{p quantitative version of L-orthogonality in the dual space of a JB*-triple})
there exist $a\perp a_{n+1}$ of norm one in $E$ satisfying
\bgl
\varphi_{n+1} (a_{n+1}) &>& 1- \min\left\{{\varepsilon'}, \frac1n \eta_0\right\}>1-\varepsilon',\label{eq gl2.9.3}\\
\phi (a) &>& 1- \min\left\{{\varepsilon'}, \frac1n \eta_0\right\}>1-\frac1n \eta_0.\nonumber
\egl
Since $\sum_{j=1}^{n} \varphi_j (a)=n\phi (a) >n- \eta_0$ and
$\|\varphi_j\|\leq 1$ we have
$$\left|\varphi_j (a)  \right| > n - \eta_0  - (n-1)
= 1- \eta_0\quad\forall j=1,\ldots,n.$$
Thus, $\left|\varphi_j  P_2(r(a)) (a)  \right|=  \left|\varphi_j (a)  \right| >  1- \eta_0$ and by the choice of $\eta_0$ we deduce
$$\left\| \varphi_j  - \varphi_j P_2 (r(a))\right\| < \frac{\delta(n,{\varepsilon'})}{2}\quad\forall j=1,\ldots, n.$$
Therefore we have
$$\sum_{j=1}^{n} |\alpha_j| \geq \left\|\sum_{j=1}^{n} \alpha_j \varphi_j P_2 (r(a))  \right\|$$
$$ \geq \left\|\sum_{j=1}^{n} \alpha_j \varphi_j  \right\| - \left\|\sum_{j=1}^{n} \alpha_j \Big(\varphi_j  -\varphi_j P_2 (r(a))\Big)  \right\|$$
$$\geq \Big(1-\delta(n+1,\varepsilon')\Big) \sum_{j=1}^{n} |\alpha_j| - \frac{\delta(n,{\varepsilon'})}{2} \sum_{j=1}^{n} |\alpha_j|
\geq \Big(1-\delta(n,{\varepsilon'})\Big) \sum_{j=1}^{n} |\alpha_j|$$
for all scalars $\alpha_j$.
Recall that $r(a)$ is an open tripotent which means that the subtriple $F:=E\cap E^{**}_2 (r(a))$ is weak$^*$-dense in $E^{**}_2 (r(a))$.
Set $\psi_j=\varphi_j P_2 (r(a))_{|F}$ for $j\leq n$. Then
\bglst
\sum_{j=1}^{n} |\alpha_j| \geq \left\|\sum_{j=1}^{n} \alpha_j \psi_j\right\|=\left\|\sum_{j=1}^{n} \alpha_j\varphi_j P_2(r(a))\right\|
\geq\Big(1-\delta(n,{\varepsilon'})\Big) \sum_{j=1}^{n} |\alpha_j|,
\eglst
for all $\alpha_j\in\mathbb{C},$ and by the induction hypothesis, applied to $F$, there exist mutually orthogonal norm-one elements $a_1,\ldots, a_n\in F$
satisfying $\psi_j(a_j)=\varphi_j (a_j) > 1-\varepsilon'$, for every $j=1,\ldots, n$. They are orthogonal to $a_{n+1}$ because $a$ is.
Together with \eqref{eq gl2.9.3} this shows the first half of \eqref{eq gl2.9.1b} (for $n+1$) because $1-\eps'>1-\eps$.
The second half follows from the claim. This ends the induction and the proof.
\end{proof}
\noindent
In passing we note an obvious reformulation of the conclusion of Theorem \ref{t quantitative version of L-orthogonality in the dual space of a JB*-triple n-functionals}:
There exists an abelian subtriple $\mathcal{C}$ of $E$ such that if we set $\psi_j={\varphi_j}_{|\mathcal{C}}$ then $(\psi_j)_{j=1}^n$
spans $\ell_1(n)$ $(1-\eps)$-isomorphically in $\mathcal{C}^*$.\bigskip\\
For the proofs of Theorems \ref{t asymptotically isometric copies of ell_1 JBW*-triple preduals} and \ref{t isomorphic copies in the dual of a JB-triple}
we need the following technical strengthening of Theorem \ref{t quantitative version of L-orthogonality in the dual space of a JB*-triple n-functionals}.

\begin{lemma}\label{l compact tripotents in Cor 2.11}
In Theorem \ref{t quantitative version of L-orthogonality in the dual space of a JB*-triple n-functionals}
the $a_j$ can be constructed such that additionally
there are mutually orthogonal compact tripotents $u_1,\ldots, u_n$ in $E^{**}$
such that $a_j\in E^{**}_2 ( u_j)$ for $j=1,\ldots,n$.
\end{lemma}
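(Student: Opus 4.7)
The plan is to start from the elements already produced by Theorem \ref{t quantitative version of L-orthogonality in the dual space of a JB*-triple n-functionals} and to shrink them slightly, via continuous triple functional calculus, so that each one sits under a compact tripotent coming from its own commutative subtriple. The main ingredient is the observation that the tripotent called $u$ in the proof of Lemma \ref{l FerPer MathScan 2009 L22} is in fact a compact-$G_{\delta}$ tripotent, hence compact.

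Concretely, given $\varepsilon>0$, I would fix $\varepsilon'\in(0,\varepsilon)$ (to be adjusted below) and apply Theorem \ref{t quantitative version of L-orthogonality in the dual space of a JB*-triple n-functionals} with parameter $\varepsilon'$ to obtain mutually orthogonal norm-one elements $a_1',\ldots,a_n'\in E$ and the corresponding $\widetilde\varphi_j'$ with $\varphi_j(a_j')>1-\varepsilon'$. For each $j$, I would work inside $E_{a_j'}\cong C_0(L_j)$ with $L_j\subseteq(0,1]$ and $1\in L_j$ (since $\|a_j'\|=1$), and with the functions $f_{\alpha}$ and $g_{\alpha}$ from the proof of Lemma \ref{l FerPer MathScan 2009 L22}. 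Choose $\alpha_j>0$ small and define
\[
a_j:=f_{\alpha_j}(a_j'),\qquad u_j:=\chi_{[\alpha_j,1]}(a_j').
\]
Since $\|f_{\alpha_j}(a_j')-a_j'\|\leq 2\alpha_j$, one has $\|a_j\|=1$ (as $f_{\alpha_j}(1)=1$) and $\varphi_j(a_j)>1-\varepsilon'-2\alpha_j$; picking $\alpha_j<(\varepsilon-\varepsilon')/2$ ensures $\varphi_j(a_j)>1-\varepsilon$.

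For the tripotent structure, $u_j$ equals the support tripotent $u(g_{\alpha_j}(a_j'))$ of the norm-one element $g_{\alpha_j}(a_j')\in E$, hence $u_j$ is compact-$G_{\delta}$ relative to $E$, and in particular compact, in $E^{**}$. In the commutative JBW$^*$-algebra $E_{a_j'}^{**}$ one reads off pointwise that $0\leq a_j\leq u_j$, so $a_j=u_ja_j\in u_jE_{a_j'}^{**}=E_{a_j',2}^{**}(u_j)\subseteq E_2^{**}(u_j)$. Mutual orthogonality of the $u_j$ follows from $u_j\leq r(a_j')$ together with the fact (listed among the equivalent characterizations of $\perp$ in Section \ref{sec preliminaries}) that $a_j'\perp a_k'$ implies $r(a_j')\perp r(a_k')$; the same argument applied to the subtriples $E_{a_j'}\perp E_{a_k'}$ gives $a_j\perp a_k$.

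Finally, the conclusions of the original theorem survive for the new $a_j$: since $|\varphi_j(a_j)|>1-\varepsilon$, the claim established in the proof of Theorem \ref{t quantitative version of L-orthogonality in the dual space of a JB*-triple n-functionals} (resting on Lemma \ref{l BeRo Lemma 2.2}) yields $\|\varphi_j-\widetilde\varphi_j\|<\varepsilon$ for $\widetilde\varphi_j=\varphi_jP_2(r(a_j))/\|\varphi_jP_2(r(a_j))\|$, and pairwise orthogonality of these $\widetilde\varphi_j$ is immediate from $r(a_j)\perp r(a_k)$. I don't expect a serious obstacle here: the only bookkeeping is to choose $\varepsilon'$ and the $\alpha_j$ small enough so that the functional-calculus perturbation stays within the tolerance $\varepsilon$, and the key structural fact, namely that $\chi_{[\alpha_j,1]}(a_j')$ is the support tripotent of a norm-one element of $E$ and therefore compact, is built into the very definitions recalled in Section \ref{sec preliminaries}.
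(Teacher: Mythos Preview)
Your proposal is correct and follows essentially the same approach as the paper: start from the $a_j$ of Theorem~\ref{t quantitative version of L-orthogonality in the dual space of a JB*-triple n-functionals}, shrink each one inside its own commutative subtriple $E_{a_j}$ via the functions $f_\alpha$, $g_\alpha$ of Lemma~\ref{l FerPer MathScan 2009 L22} so as to land under a compact-$G_\delta$ tripotent, and then invoke the claim from the proof of Theorem~\ref{t quantitative version of L-orthogonality in the dual space of a JB*-triple n-functionals} for the $\widetilde\varphi_j$. The only adjustment is the order of choices: you must first fix $\varepsilon'$ from that claim (so that $|\varphi_j(a_j)|>1-\varepsilon'$ gives $\|\varphi_j-\widetilde\varphi_j\|<\varepsilon$), then apply Theorem~\ref{t quantitative version of L-orthogonality in the dual space of a JB*-triple n-functionals} with a parameter strictly smaller than $\varepsilon'$ and take $\alpha_j$ small enough that the perturbed $a_j$ still satisfy $\varphi_j(a_j)>1-\varepsilon'$---exactly the bookkeeping you flag at the end.
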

\begin{proof}
We {\sc claim} that if a JB$^*$-triple $E$, $a\in E$, $\varphi\in E^*$, and $\eps'>0$ are given such that
\bgl
\norm{\varphi}\leq1,\quad\norm{a}\leq1,\quad\varphi(a)>1-\eps'\label{eq gl2.11.1}
\egl
then there exist a compact tripotent $u\in E_a^{**}\subset E_2^{**}(e)$ and $b\in E_a\cap E_2^{**}(u)$ such that $\norm{b}=1$ and
$\varphi(b)>1-\eps'$.\smallskip

In order to show the claim suppose \eqref{eq gl2.11.1} holds.
Define $z_m=f_{\alpha}(a)\in E_a$ for $\alpha=\norm{b_j}/m$ where $f_{\alpha}$ is as in the proof of Lemma \ref{l FerPer MathScan 2009 L22}.
Since $\norm{z_m-a}\to0$ there is $m_0$ such that $\varphi(e^{i\theta}z_{m_0})>1-\eps'$ for an appropriate $\theta\in\R$.
Also $\norm{z_{m_0}}\leq1$. It remains to set $b=e^{i\theta}z_{m_0}/\norm{z_{m_0}}$ and $u= \chi_{_{[{\|b_j\|}/{2 m_0}, \|b_j\|]\cap L}}$
and the claim is proved.\smallskip

Now we apply the claim $n$ times to pairwise orthogonal $a_j$ and note that $u_j\in E_{a_j}^{**}\perp E_{a_k}^{**}\ni u_k$ if $j\neq k$.
The claim in the proof of Theorem \ref{t quantitative version of L-orthogonality in the dual space of a JB*-triple n-functionals}
shows that it is enough to replace $a_j$ by $b_j$ in \eqref{eq gl2.9.1b} in order to finish the proof.
\end{proof}\noindent
Recalling that Peirce projections associated with a tripotent in a JBW$^*$-triple are weak$^*$-continuous,
and the fact that the range tripotent of an element in a JBW$^*$-triple always lies in the JBW$^*$-triple, the arguments given above show:

\begin{corollary}\label{c perturbation of L-orthogonality in the predual space of a JBW*-triple}
For each $\varepsilon>0$ and each natural $n$, there exists a positive $\delta = \delta(n,\varepsilon)$ such that
for every JBW$^*$-triple $W,$ and every finite set of functionals $\varphi_1,\ldots, \varphi_n$ in $W_*$ satisfying \eqref{eq gl2.9.4}
there exist orthogonal norm one elements $a_1,\ldots, a_n\in W$  and orthogonal functionals $\widetilde{\varphi}_1,\ldots,\widetilde{\varphi}_n\in W_*$
of norm one such that \eqref{eq gl2.9.1b} holds.
\end{corollary}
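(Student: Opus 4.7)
The plan is to reduce directly to Theorem \ref{t quantitative version of L-orthogonality in the dual space of a JB*-triple n-functionals}. Every JBW$^*$-triple $W$ is in particular a JB$^*$-triple, so any $\varphi_1,\ldots,\varphi_n$ in $W_*\subset W^*$ satisfying \eqref{eq gl2.9.4} also satisfies the hypotheses of that theorem. Applying it to $W$ yields mutually orthogonal norm-one elements $a_1,\ldots,a_n$ \emph{in $W$} (they are produced as elements of the JB$^*$-triple to which the theorem is applied) with $\varphi_j(a_j)>1-\varepsilon$. This already gives the first half of \eqref{eq gl2.9.1b}, no further work being needed since $W$ and not some larger bidual serves as the ambient JB$^*$-triple.

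The remaining point is that the perturbed functionals $\widetilde{\varphi}_j = \varphi_j P_2(r(a_j))/\norm{\varphi_j P_2(r(a_j))}$ produced by Theorem \ref{t quantitative version of L-orthogonality in the dual space of a JB*-triple n-functionals} are \emph{a priori} only in $W^*$, whereas the corollary asks for them to be normal. Here we invoke the two facts recalled just before the statement. First, since $a_j\in W$ and $W$ is a JBW$^*$-triple, the range tripotent $r(a_j)$ is the weak$^*$-limit in $W$ of $a_j^{[1/(2m-1)]}\in W$ and therefore belongs to $W$ itself, not merely to $W^{**}$. Second, the Peirce-2 projection associated with a tripotent of a JBW$^*$-triple is weak$^*$-continuous, as recalled in the preliminaries. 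Combining these, $\varphi_j P_2(r(a_j))$ is the composition of a weak$^*$-continuous map on $W$ with the normal functional $\varphi_j$, hence itself weak$^*$-continuous, i.e. an element of $W_*$; normalization preserves membership in $W_*$.

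The orthogonality of the $\widetilde{\varphi}_j$ and the estimate $\norm{\varphi_j-\widetilde{\varphi}_j}<\varepsilon$ are pure norm/algebraic statements that transfer verbatim from Theorem \ref{t quantitative version of L-orthogonality in the dual space of a JB*-triple n-functionals}; they make no distinction between $W^*$ and $W_*$. Similarly, the same value of $\delta(n,\varepsilon)$ as produced by that theorem works here. Consequently the corollary is essentially a bookkeeping consequence of the JB$^*$-triple result, and there is no genuine obstacle: the only substantive observation is the weak$^*$-continuity check that places $\widetilde{\varphi}_j$ in the predual, which is immediate from the two facts recalled above.
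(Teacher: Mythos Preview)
Your argument is correct and is essentially the paper's own: the authors state just before the corollary that it follows from Theorem \ref{t quantitative version of L-orthogonality in the dual space of a JB*-triple n-functionals} by the two facts you singled out, namely that the range tripotent of an element of a JBW$^*$-triple already lies in the JBW$^*$-triple and that Peirce projections associated with a tripotent in a JBW$^*$-triple are weak$^*$-continuous. The only minor remark is that the convergence of $a_j^{[1/(2m-1)]}$ to $r(a_j)$ is stated in the preliminaries for the strong$^*$-topology rather than the weak$^*$-topology, but since the former is stronger this makes no difference.
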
\noindent
The following corollary will not be needed in the sequel but could perhaps be useful elsewhere. The argument leading to part (a)
has already been used in the proof of Theorem \ref{t quantitative version of L-orthogonality in the dual space of a JB*-triple n-functionals}.
\begin{corollary}\label{r Theorems with open tripotent}
For each $\varepsilon>0$ and each natural $n$, there exists a positive $\delta = \delta(n,\varepsilon)$ with the following properties.
\begin{enumerate} \item[(a)]
Let $E$ be a JB$^*$-triple, $e\in E^{**}$ an open tripotent and let $\varphi_1,\ldots, \varphi_n$ be functionals in the closed unit ball of $E^*$.
If
\bgl\label{eq gl2.12}
\sum_{j=1}^{n} |\alpha_j| \geq \left\|\sum_{j=1}^{n} \alpha_j \varphi_j P_2 (e)  \right\|
\geq \left(1-\delta(n,\varepsilon)\right) \sum_{j=1}^{n} |\alpha_j|, \quad\forall \alpha_j\in \mathbb{C}
\egl
then there exist orthogonal norm-one elements $a_1,\ldots, a_n$ in $E\cap E^{**}_2 (e)$ and
mutually orthogonal norm-one functionals $\widetilde{\varphi}_1,$ $\ldots,$ $\widetilde{\varphi}_n$ in $E^*$ satisfying \eqref{eq gl2.9.1b}.
\item[(b)]
Let $W$ be a JBW$^*$-triple, $e\in E^{**}$ an arbitrary tripotent and let $\varphi_1,\ldots, \varphi_n$ be functionals in the closed unit ball of
$W_*$ satisfying \eqref{eq gl2.12}.
Then there exist mutually orthogonal elements $a_1,\ldots, a_n$ of norm one in $W_2 (e)$
and mutually orthogonal norm-one functionals $\widetilde{\varphi}_1,\ldots,\widetilde{\varphi}_n$ in $W_*$ satisfying \eqref{eq gl2.9.1b}.
\end{enumerate}
\end{corollary}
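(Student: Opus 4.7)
The plan is to reduce both parts to the previously established results: Theorem \ref{t quantitative version of L-orthogonality in the dual space of a JB*-triple n-functionals} for part (a) and Corollary \ref{c perturbation of L-orthogonality in the predual space of a JBW*-triple} for part (b), by restricting the functionals to a suitable subtriple on which they still enjoy the same $\ell_1(n)$-type estimate.

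For part (a), set $F := E \cap E_2^{**}(e)$, which is a JB$^*$-subtriple of $E$. Because $e$ is open by hypothesis, $F$ is weak$^*$-dense in $E_2^{**}(e)$, and hence by the Kaplansky density theorem for JBW$^*$-triples \cite[Cor.\ 3.3]{BarFri90} the closed unit ball of $F$ is weak$^*$-dense in the closed unit ball of $E_2^{**}(e)$. Define $\psi_j := \varphi_j|_F \in F^*$; noting that $P_2(e)(y)=y$ for every $y \in F$, we have $\psi_j = (\varphi_j P_2(e))|_F$. I claim that
$$
\left\|\sum_{j=1}^n \alpha_j \psi_j\right\|_{F^*} = \left\|\sum_{j=1}^n \alpha_j \varphi_j P_2(e)\right\|_{E^*}.
$$
The inequality ``$\le$'' is trivial. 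For ``$\ge$'', pick $x$ in the closed unit ball of $E$; then $P_2(e)x$ lies in the closed unit ball of $E_2^{**}(e)$, so by Kaplansky it is the weak$^*$-limit of a net $(y_\lambda)$ in the unit ball of $F$. Each $\varphi_j \in E^*$ extends to a weak$^*$-continuous functional on $E^{**}$, hence $\varphi_j(P_2(e)x) = \lim_\lambda \varphi_j(y_\lambda) = \lim_\lambda \psi_j(y_\lambda)$, which gives the claim.

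Choose $\eps' \in (0,\eps)$ exactly as in the opening claim of the proof of Theorem \ref{t quantitative version of L-orthogonality in the dual space of a JB*-triple n-functionals}: whenever $\xi\in E^*, b\in E$ with $\|\xi\|\leq1,\|b\|\leq1$ and $|\xi(b)|>1-\eps'$, one has $\|\xi-\widetilde\xi\|<\eps$ where $\widetilde\xi = \xi P_2(r(b))/\|\xi P_2(r(b))\|$. Put $\delta(n,\eps) := \delta(n,\eps')$ with $\delta(\cdot,\cdot)$ as in Theorem \ref{t quantitative version of L-orthogonality in the dual space of a JB*-triple n-functionals}. The hypothesis \eqref{eq gl2.12} together with the norm equality just proved shows that $\psi_1,\ldots,\psi_n$ satisfy \eqref{eq gl2.9.4} in $F^*$ with parameter $\delta(n,\eps')$. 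Theorem \ref{t quantitative version of L-orthogonality in the dual space of a JB*-triple n-functionals} applied to $F$ then yields mutually orthogonal norm-one elements $a_1,\ldots,a_n \in F \subset E \cap E_2^{**}(e)$ with $\psi_j(a_j)>1-\eps'$, whence $\varphi_j(a_j)=\psi_j(a_j)>1-\eps'$. Setting $\widetilde\varphi_j := \varphi_j P_2(r(a_j))/\|\varphi_j P_2(r(a_j))\|$ in $E^*$, the cited claim gives $\|\varphi_j - \widetilde\varphi_j\|_{E^*} < \eps$. Mutual orthogonality of the $\widetilde\varphi_j$ follows from $a_j \perp a_k$: one has $r(a_j)\perp r(a_k)$, hence $E_2^{**}(r(a_j)) \perp E_2^{**}(r(a_k))$, and the support tripotent of $\widetilde\varphi_j$ lies in $E_2^{**}(r(a_j))$.

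Part (b) follows the same outline, with $W$ in place of $E$ and the JBW$^*$-subtriple $F := W_2(e)$ replacing $E\cap E_2^{**}(e)$. The norm equality is now immediate since $P_2(e):W\to W$ has range $W_2(e)$, and $\varphi_j P_2(e)\in W_*$ by weak$^*$-continuity of $P_2(e)$; one invokes Corollary \ref{c perturbation of L-orthogonality in the predual space of a JBW*-triple} instead of Theorem \ref{t quantitative version of L-orthogonality in the dual space of a JB*-triple n-functionals}, and $\widetilde\varphi_j\in W_*$ because $P_2(r(a_j))$ is weak$^*$-continuous. The main delicate point is the norm equality in part (a): it requires careful use of Kaplansky density together with weak$^*$-continuity on $E^{**}$ of functionals from $E^*$; the rest is bookkeeping around results already established.
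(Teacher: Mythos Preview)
Your proof is correct and follows essentially the same approach as the paper's: both restrict to $F = E \cap E_2^{**}(e)$ (respectively $W_2(e)$) and invoke Theorem~\ref{t quantitative version of L-orthogonality in the dual space of a JB*-triple n-functionals} (respectively Corollary~\ref{c perturbation of L-orthogonality in the predual space of a JBW*-triple}). You spell out more carefully than the paper two points it leaves implicit: the norm equality $\|\sum\alpha_j\psi_j\|_{F^*}=\|\sum\alpha_j\varphi_jP_2(e)\|_{E^*}$ via Kaplansky density, and the passage from the conclusion for $\psi_j\in F^*$ to the conclusion \eqref{eq gl2.9.1b} for $\varphi_j\in E^*$ (handled through the auxiliary $\eps'$ and the claim from the proof of Theorem~\ref{t quantitative version of L-orthogonality in the dual space of a JB*-triple n-functionals}).
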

\begin{proof}(a)
Set $F= E\cap E^{**}_2 (e)$ and $\psi_j={\varphi_j P_2(e)}_{|F}$. Then $\overline{F}^{w^*}= E^{**}_2 (e)$ because $e$ is open.
The $\psi_j$ satisfy \eqref{eq gl2.9.4} and it is enough to apply Theorem \ref{t quantitative version of L-orthogonality in the dual space of a JB*-triple n-functionals}
to $F$. Similarly, for part (b) identify $\varphi_j P_2 (e)$ with $\varphi_{|W_2(e)}\in (W_2(e))_*$ and apply
Corollary  \ref{c perturbation of L-orthogonality in the predual space of a JBW*-triple} to $W_2(e)$.
\end{proof}

\noindent
We can now establish the promised quantitative version of the last statement in Lemma \ref{l EdRu01 L-orthogonal}.

\begin{proposition}\label{p quantitative version of M-orthogonal tripotents} Given $\varepsilon>0$ there exits $\delta = \delta(\varepsilon)$ satisfying that for every JB$^*$-triple $E$ and every couple of tripotents $u,v$ in $E$ with $$1-\delta < \|u\pm v\|< 1+\delta,$$ we have $\|u -P_0 (v)(u)\|< \varepsilon$ and $\|v -P_0 (u)(v)\|< \varepsilon$.
\end{proposition}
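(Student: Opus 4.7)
The natural approach is a compactness/ultraproduct argument, entirely parallel to the one used in the proof of Proposition \ref{p quantitative version of L-orthogonality in the dual space of a JB*-triple}, combined with the tripotent part of Lemma \ref{l EdRu01 L-orthogonal}. Suppose, for contradiction, that for some $\eps_0>0$ and every $n\in\NN$ there exist a JB$^*$-triple $E_n$ and tripotents $u_n,v_n\in E_n$ with
\[
1-\tfrac{1}{n}<\|u_n\pm v_n\|<1+\tfrac{1}{n}
\]
yet $\|u_n-P_0(v_n)(u_n)\|\geq\eps_0$ (the case $\|v_n-P_0(u_n)(v_n)\|\geq\eps_0$ being symmetric). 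We may assume $u_n,v_n$ are nonzero, hence of norm $1$, since otherwise the conclusion is trivial.

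Fix a free ultrafilter $\mathcal{U}$ on $\NN$ and form the JB$^*$-triple ultraproduct $(E_n)_\mathcal{U}$, whose triple product is $\{[x_n],[y_n],[z_n]\}_\mathcal{U}=[\{x_n,y_n,z_n\}]_\mathcal{U}$ as recalled in the paragraph preceding Proposition \ref{p quantitative version of L-orthogonality in the dual space of a JB*-triple}. The elements $U:=[u_n]_\mathcal{U}$ and $V:=[v_n]_\mathcal{U}$ are norm-one tripotents in $(E_n)_\mathcal{U}$, and
\[
\|U\pm V\|=\lim_\mathcal{U}\|u_n\pm v_n\|=1.
\]
Hence $U\perp_M V$, and the tripotent case of Lemma \ref{l EdRu01 L-orthogonal} yields $U\perp V$ in $(E_n)_\mathcal{U}$. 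In particular $U\in\bigl((E_n)_\mathcal{U}\bigr)_0(V)$, so $P_0(V)(U)=U$.

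The key observation is that the Peirce-$0$ projection is, by definition, the polynomial
\[
P_0(V)(X)=X-2\{V,V,X\}+\{V,\{V,X,V\},V\}
\]
in the triple product, and this operation is computed coordinatewise in $(E_n)_\mathcal{U}$. Therefore
\[
P_0(V)(U)=[P_0(v_n)(u_n)]_\mathcal{U},
\]
and the equality $P_0(V)(U)=U$ translates into
\[
\lim_\mathcal{U}\|u_n-P_0(v_n)(u_n)\|=0,
\]
contradicting $\|u_n-P_0(v_n)(u_n)\|\geq\eps_0$ for all $n$. This finishes the argument.

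The only genuinely nontrivial ingredient is the passage of $P_0$ through the ultraproduct, which is immediate from Dineen's description of the triple product on $(E_n)_\mathcal{U}$ and the explicit polynomial formula for $P_0$; the rest is a direct application of the $M$-orthogonality half of Lemma \ref{l EdRu01 L-orthogonal}. No quantitative control on $\delta(\eps)$ is obtained this way, but none is claimed in the statement.
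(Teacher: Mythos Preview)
Your proof is correct and is essentially identical to the paper's own argument: both proceed by contradiction, pass to the ultraproduct $(E_n)_\mathcal{U}$, apply the tripotent half of Lemma \ref{l EdRu01 L-orthogonal} to obtain $U\perp V$, and then use that $P_0$ is a triple polynomial (hence computed coordinatewise) to derive $\lim_\mathcal{U}\|u_n-P_0(v_n)(u_n)\|=0$. The only cosmetic difference is that the paper keeps the disjunction $\|u_n-P_0(v_n)(u_n)\|\geq\eps_0$ \emph{or} $\|v_n-P_0(u_n)(v_n)\|\geq\eps_0$ throughout and derives both limits simultaneously, whereas you invoke symmetry (implicitly after a pigeonhole selection); both are fine.
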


\begin{proof} Let us note that the statement is true whenever the set of tripotents in a JB$^*$-triple $E$ reduces to the zero element. Suppose, contrary to our claim, that there exists $\varepsilon_0>0$ such that for each natural $n$, we can find a JB$^*$-triple $E_n$ and tripotents
$u_{n}$ and $v_{n}$ with $1-\frac1n < \|u_{n} \pm v_{n}\| < 1+\frac1n $
satisfying $\|u_n -P_0 (v_n)(u_n)\|\geq \varepsilon_0$ or $\|v_n -P_0 (u_n)(v_n)\|\geq \varepsilon_0.$\smallskip

Take a non-trivial ultrafilter $\mathcal{U}$ in $\mathbb{N}$. The elements $[u_{n}]_{\mathcal{U}}$ and $[v_{n}]_{\mathcal{U}}$
are non-zero tripotents in $(E_n)_{\mathcal{U}}$ with $\left\|[u_{n}]_{\mathcal{U}} \pm [v_{n}]_{\mathcal{U}} \right\| = 1$, that is,
$[u_{n}]_{\mathcal{U}}\perp_{M} [v_{n}]_{\mathcal{U}}$ in $(E_n)_{\mathcal{U}}$. The final statement in Lemma \ref{l EdRu01 L-orthogonal}
implies that $[u_{n}]_{\mathcal{U}}\perp [v_{n}]_{\mathcal{U}}$ in $(E_n)_{\mathcal{U}}$.
In particular $$ [P_0(u_{n}) (v_{n}) ]_{\mathcal{U}} = P_0 ([u_{n}]_{\mathcal{U}}) ( [v_{n}]_{\mathcal{U}}) =[v_{n}]_{\mathcal{U}}$$
(because $P_0 ([u_{n}]_{\mathcal{U}})=[P_0(u_{n}) ]_{\mathcal{U}}$) and
$$[P_0(v_{n}) (u_{n}) ]_{\mathcal{U}} = P_0 ([v_{n}]_{\mathcal{U}}) ( [u_{n}]_{\mathcal{U}}) =[u_{n}]_{\mathcal{U}},$$
which implies that $\displaystyle\lim_{\mathcal{U}} \|P_0(u_{n}) (v_{n}) -v_n\| =0 $ and
$\displaystyle\lim_{\mathcal{U}} \|P_0(v_{n}) (u_{n}) -u_n\| =0,$ contradicting our assumptions.
\end{proof}

\section{Infinite dimensional copies of $\ell_1$ in preduals of JBW$^*$-triples}\label{sec 4}
\begin{theorem}\label{t asymptotically isometric copies of ell_1 JBW*-triple preduals}
Let $W$ be a JBW$^*$-triple and let $(\varphi_m)$ be a bounded sequence in its predual $W_*$. If $(\varphi_m)$ spans $\ell_1$
almost isometrically then there are a subsequence $(\varphi_{m_n})$ of $(\varphi_m)$ and a sequence $(\widetilde{\varphi}_n)$
of pairwise orthogonal functionals in $W_*$ such that $\|\varphi_{m_n} -  \widetilde{\varphi}_n\| \to 0$ when $l\to \infty$.
\end{theorem}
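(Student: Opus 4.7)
The plan is to deduce Theorem \ref{t asymptotically isometric copies of ell_1 JBW*-triple preduals} from Corollary \ref{c perturbation of L-orthogonality in the predual space of a JBW*-triple} by an inductive construction that adjoins one functional at a time, modeled on the inductive step in the proof of Theorem \ref{t quantitative version of L-orthogonality in the dual space of a JB*-triple n-functionals}. Fix a summable sequence $\eps_n\to 0$. The goal is to construct, recursively, an increasing subsequence $m_1<m_2<\cdots$, mutually orthogonal norm-one elements $a_n\in W$ with range tripotents $e_n:=r(a_n)$, and pairwise orthogonal norm-one functionals $\widetilde\varphi_n:=\varphi_{m_n}P_2(e_n)/\|\varphi_{m_n}P_2(e_n)\|$ so that $\|\varphi_{m_n}-\widetilde\varphi_n\|<\eps_n$.

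\textbf{Inductive step.} Suppose $m_1<\cdots<m_n$ and $e_1,\ldots,e_n$ are already constructed. Set $e=e_1+\cdots+e_n$ and $\phi=(\widetilde\varphi_1+\cdots+\widetilde\varphi_n)/n$. Since the $\widetilde\varphi_j$ are pairwise $L$-orthogonal (Lemma \ref{l EdRu01 L-orthogonal}) we have $\|\phi\|=1$. The hypothesis that $(\varphi_m)$ spans $\ell_1$ almost isometrically, combined with $\|\varphi_{m_j}-\widetilde\varphi_j\|<\eps_j$, guarantees that for $m>m_n$ sufficiently large $\|\phi\pm\varphi_m\|\geq 2(1-\delta')$ with $\delta'$ as small as we please. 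Applying Proposition \ref{p quantitative version of L-orthogonality in the dual space of a JB*-triple} to the pair $(\phi,\varphi_m)$ produces orthogonal norm-one $a,b\in W$ with $\phi(a)>1-\eta$ and $\varphi_m(b)>1-\eta$ for any preassigned $\eta>0$. From $\phi(a)>1-\eta$ and $\|\widetilde\varphi_j\|\leq 1$ I deduce $\widetilde\varphi_j(a)>1-n\eta$ for each $j\leq n$, whence Lemma \ref{l BeRo Lemma 2.2} implies $\|\widetilde\varphi_j-\widetilde\varphi_jP_2(r(a))\|<\eps''(\eta)$. Since $\widetilde\varphi_j$ has a unique support tripotent $e_j$, this approximate identity expresses that each $e_j$ lies approximately below $r(a)$.

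\textbf{Upgrading to exact orthogonality.} Were $e_j\leq r(a)$ to hold exactly, the Peirce rules \eqref{eq peirce rules2} would give $W_0(r(a))\subseteq W_0(e)$, and then $b\in W_0(r(a))$ would entail $b\perp e$ directly. To enforce exact orthogonality in the approximate regime, I set $a_{n+1}:=P_0(e)(b)/\|P_0(e)(b)\|$. Quantifying the approximate relation $e_j\lesssim r(a)$ shows that $\|(I-P_0(e))(b)\|$ is small, so $\varphi_m(a_{n+1})>1-\eps_{n+1}$ provided $\eta$ is chosen sufficiently small in terms of $\eps_{n+1}$. Because $a_{n+1}\in W_0(e)$, its range tripotent $e_{n+1}:=r(a_{n+1})$ is strictly orthogonal to every $e_j$, and $\widetilde\varphi_{n+1}:=\varphi_mP_2(e_{n+1})/\|\varphi_mP_2(e_{n+1})\|$ is orthogonal to all previously constructed $\widetilde\varphi_j$. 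A final application of Lemma \ref{l BeRo Lemma 2.2} yields $\|\varphi_m-\widetilde\varphi_{n+1}\|<\eps_{n+1}$; setting $m_{n+1}=m$ closes the induction.

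\textbf{Main obstacle.} The crux lies in the ``upgrading'' step: turning the approximate inclusion $e_j\lesssim r(a)$ into a quantitative bound on the Peirce-$1$ and Peirce-$2$ parts of $b$ relative to $e$, so that projecting $b$ to $W_0(e)$ loses only a controlled amount of $\varphi_m$-mass. This demands a nested calibration of parameters: $\eta$ tuned to $\eps_{n+1}$, and then $m$ chosen large enough so that the tail-spanning constant $\delta_m$ dominates all accumulated error terms $n\eta$, $\sum_{j\leq n}\eps_j/n$, and the residual from the Peirce projection. Once the parameters are nested correctly, summability of $(\eps_n)$ furnishes $\|\varphi_{m_n}-\widetilde\varphi_n\|\to 0$ as $n\to\infty$.
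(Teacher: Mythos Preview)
Your proposal has a genuine gap at the ``upgrading to exact orthogonality'' step, and it is precisely the step you flag as the main obstacle. From $\widetilde\varphi_j(a)>1-n\eta$ and Lemma \ref{l BeRo Lemma 2.2} you obtain that $\|\widetilde\varphi_j-\widetilde\varphi_jP_2(r(a))\|$ is small. This is information about the \emph{functional} $\widetilde\varphi_j$, not about its support tripotent $e_j$ as an element of $W$. You then assert that this forces $\|(I-P_0(e))(b)\|$ to be small for $b\in W_0(r(a))$, but no tool in the paper (nor any standard estimate) converts ``$\widetilde\varphi_j$ almost supported in $W_2(r(a))$'' into a norm bound on $P_1(e_j)(b)+P_2(e_j)(b)$ for an arbitrary $b\in W_0(r(a))$. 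The exact implication ``$e_j\le r(a)\Rightarrow W_0(r(a))\subseteq W_0(e_j)$'' follows from Peirce arithmetic, but its quantitative analogue in the direction you need is simply not available, and your proposal does not supply one.

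The paper circumvents this by reversing the roles: instead of fixing $\widetilde\varphi_1,\ldots,\widetilde\varphi_n$ and projecting the new \emph{element} into $W_0(e)$, it finds many candidate new elements (via Corollary \ref{c perturbation of L-orthogonality in the predual space of a JBW*-triple} applied to $nj$ functionals), uses a pigeonhole argument on the prehilbertian seminorms $\|r(a_m)\|_{\phi_{m_k}^{(n)}}$ to select one candidate whose range tripotent is small in all these seminorms, and then projects the \emph{old functionals} onto $W_0(r(a_{m_{n+1}}))$---a perturbation controlled directly by Lemma \ref{l FerPer MathScan 2009 L21}. Because the previously constructed functionals are modified at each step, they form Cauchy sequences whose limits are the desired $\widetilde\varphi_k$. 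Your scheme, by freezing the $\widetilde\varphi_j$ once constructed, forces you to project elements rather than functionals, and that is exactly where the available estimates run out.
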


\begin{proof}
We can assume that $\|\varphi_m\| =1$, for every $m$. Let $(\nu_n)$ be a sequence of strictly positive numbers
such that $\sum_{n=1}^{\infty} \nu_n <\infty$.
We shall prove, by induction over $n$, the existence of $m_n\in \mathbb{N},$ and $\phi_{m_1}^{(n)},\ldots, \phi_{m_n}^{(n)}$ in $W_*$
satisfying $m_{n-1} < m_n$, and for each natural $n$
$$  \phi_{m_k}^{(n)}\perp \phi_{m_l}^{(n)}\quad\forall k\neq l\in\{1,\ldots, n\},$$
$$\|\phi_{m_k}^{(n)}\| =1,\quad\forall k\leq n,$$
$$\|\phi_{m_k}^{(n)}-\phi_{m_k}^{(n-1)}\| < \nu_n,\quad \forall k=1,\ldots, n-1\quad \mbox{if }n\geq2,$$ and
$\|\phi_{m_n}^{(n)}- \varphi_{m_n}\| < \nu_n$ .\smallskip

When $n=1$ we set $m_1=1$, $\phi_{1}^{(1)}= \varphi_1$ and the statement is clear. Suppose that $m_1<m_2<\ldots<m_n$, $\{\phi_{m_1}^{(1)}\}$, $\{\phi_{m_1}^{(2)}, \phi_{m_2}^{(2)}\}$, ..., $\{\phi_{m_1}^{(n)},\ldots, \phi_{m_n}^{(n)}\}$ have been defined satisfying the above properties.\smallskip

By Corollary \ref{c perturbation of L-orthogonality in the predual space of a JBW*-triple}, there exists $\delta_1 = \min\{\delta(n, \nu_{n+1}/2),\nu_{n+1}/2\}>0$.
Choose a natural $j$ satisfying  $\frac{21}{\sqrt{j}}  < \delta_1$.
We use Corollary \ref{c perturbation of L-orthogonality in the predual space of a JBW*-triple} again in order to choose
$\delta_0 = \delta(n j , \nu_{n+1})>0$.
Since $(\varphi_m)$ spans $\ell_1$ almost isometrically there exists $m_0 > m_n$ satisfying
$$ (1-\delta_0) \sum_{m=m_0}^{\infty} |\alpha_m| \leq \left\|\sum_{m=m_0}^{\infty} \alpha_m \varphi_m \right\|\quad\forall\alpha_m\in\mathbb{C}.$$
Set $N= \{m_0+1, \ldots, m_0 + n j \}\subseteq \mathbb{N}.$ Since
$$ (1-\delta(n j , \nu_{n+1})) \sum_{m=m_0+1}^{m_0 + n j } |\alpha_m| \leq \left\|\sum_{m=m_0+1}^{m_0 + n j } \alpha_m \varphi_m \right\|\quad\forall\alpha_m\in\mathbb{C},$$
Corollary \ref{c perturbation of L-orthogonality in the predual space of a JBW*-triple} implies the existence of mutually orthogonal elements
$a_1,\ldots, a_{n j}$ in the closed unit ball of $W$ such that
\begin{equation}\label{eq one 2601} \Big\| \varphi_m- \frac{{\varphi}_m P_2 (r(a_m))}{\|{\varphi}_m P_2 (r(a_m))\|}\Big\| <\nu_{n+1}
\quad\forall m\in N.
\end{equation}

On the other hand, it is clear, by orthogonality, that
$$0\leq \sum_{m\in N} \sum_{k=1}^{n}   \left\|r(a_m) \right\|_{\phi_{m_k}^{(n)}}^2 = \sum_{k=1}^{n}  \sum_{m\in N} \left\|r(a_m) \right\|_{\phi_{m_k}^{(n)}}^2 =  \sum_{k=1}^{n} \left\| \sum_{m\in N} r(a_m) \right\|_{\phi_{m_k}^{(n)}}^2 \leq n.$$
Thus, there exists $m_{n+1}\in N$ satisfying
$$ \left\|r(a_{m_{n+1}}) \right\|_{\phi_{m_k}^{(n)}}^2 \leq \frac1j \quad\forall k=1,\ldots, n$$
hence, by Lemma \ref{l FerPer MathScan 2009 L21},
\begin{equation}\label{eq two 2501} \| \phi_{m_k}^{(n)} - \phi_{m_k}^{(n)}P_0 (r(a_{m_{n+1}})) \| \leq 21 \frac{1}{\sqrt{j}}
\quad\forall k=1,\ldots, n.
\end{equation}
We define $\widetilde{\phi}_{m_k}^{(n+1)} = \phi_{m_k}^{(n)} P_0 (r(a_{m_{n+1}})),$ for $k=1,\ldots, n$
and $${\phi}_{m_{n+1}}^{(n+1)} = \frac{\varphi_{m_{n+1}} P_2 (r(a_{m_{n+1}}))}{\| \varphi_{m_{n+1}} P_2 (r(a_{m_{n+1}})) \|}.$$
By \eqref{eq one 2601}, $\Big\| \varphi_{m_{n+1}}- {\phi}_{m_{n+1}}^{(n+1)} \Big\| <\nu_{n+1},$ and by \eqref{eq two 2501}
\begin{equation}\label{eq 3 2601} \| \phi_{m_k}^{(n)} - \widetilde{\phi}_{m_k}^{(n+1)} \| \leq 21 \frac{1}{\sqrt{j}}< \delta_1
\leq \frac{\nu_{n+1}}{2} \quad\forall k=1,\ldots, n.
\end{equation}
Therefore we have
\bglst
\sum_{k=1}^{n} |\alpha_k|
&\geq& \left\| \sum_{k=1}^n \alpha_k \widetilde{\phi}_{m_k}^{(n+1)} \right\| 
\geq \left\| \sum_{k=1}^n \alpha_k {\phi}_{m_k}^{(n)} \right\|- \left\| \sum_{k=1}^n \alpha_k (\widetilde{\phi}_{m_k}^{(n+1)}-{\phi}_{m_k}^{(n)}) \right\|\\
&\geq&\left\| \sum_{k=1}^n \alpha_k {\phi}_{m_k}^{(n)} \right\| -\delta_1 \sum_{k=1}^{n} |\alpha_k| = \sum_{k=1}^{n} |\alpha_k|-\delta_1 \sum_{k=1}^{n} |\alpha_k| \\
&\geq& (1-\delta(n, \nu_{n+1}/2)) \sum_{k=1}^{n} |\alpha_k|    \quad\quad\forall\alpha_m\in\mathbb{C}.
\eglst
By Corollary \ref{c perturbation of L-orthogonality in the predual space of a JBW*-triple}, applied to the JBW$^*$-triple $W_0 (r(a_{m_{n+1}}))$
and the functionals $\{\widetilde{\phi}_{m_1}^{(n+1)},\ldots, \widetilde{\phi}_{m_n}^{(n+1)}\}\subset \left(W_0 (r(a_{m_{n+1}})) \right)_*$,
we can find mutually orthogonal elements $b_1,\ldots, b_{n}$ in the closed unit ball of $W_0 (r(a_{m_{n+1}}))$ such that
\begin{equation}\label{eq 4 2601}
\Big\| \widetilde{\phi}_{m_k}^{(n+1)} - \frac{\widetilde{\phi}_{m_k}^{(n+1)}P_2 (r(b_j))}{\|\widetilde{\phi}_{m_k}^{(n+1)} P_2 (r(b_j))\|}\Big\|
<\frac{\nu_{n+1}}{2} \quad\forall k=1,\ldots,n.
\end{equation}
We define ${\phi}_{m_k}^{(n+1)} := \frac{\widetilde{\phi}_{m_k}^{(n+1)}P_2 (r(b_j))}{\|\widetilde{\phi}_{m_k}^{(n+1)} P_2 (r(b_j))\|}$
for $k=1,\ldots,n$.
The inequalities \eqref{eq 3 2601} and \eqref{eq 4 2601} show $\|\phi_{m_k}^{(n)}-\phi_{m_k}^{(n+1)}\| < \nu_n$ ($k=1,\ldots,n$), which finishes the induction argument.\smallskip

Fix a natural $k$ and consider the sequence $(\phi_{m_k}^{(n)})_{n\geq k}$.
The inequalities $\|\phi_{m_k}^{(n)}-\phi_{m_k}^{(n-1)}\| < \nu_n$ and
$$\|\phi_{m_k}^{(n)}-\phi_{m_k}^{(i)}\| <   \sum_{j=i+1}^{n} \nu_j \to 0 \mbox{ if }n>i\to\infty$$
show that $(\phi_{m_k}^{(n)})_{n\geq k}$ is a Cauchy sequence which converges to some $\widetilde{\varphi}_k\in W_*$.
By construction $\phi_{m_k}^{(n)} \perp \phi_{m_j}^{(n)}$ for every $k\neq j$, and every $n\geq \max\{j,k\}$, therefore
$$\|\widetilde{\varphi}_k \pm \widetilde{\varphi}_j \| = \lim_{n\to\infty} \|\phi_{m_k}^{(n)} \pm \phi_{m_j}^{(n)}\| = 2$$
for every $k\neq j$ in $\mathbb{N}$. This implies $\widetilde{\varphi}_k \perp \widetilde{\varphi}_j$ for every $j\neq k$
(cf. Lemma \ref{l EdRu01 L-orthogonal}).
Finally, the inequality
$$\|\varphi_{m_n} - \widetilde{\varphi}_n \| \leq \|\phi_{m_n}^{(n)}- \varphi_{m_n}\| + \|\phi_{m_n}^{(n)} -  \widetilde{\varphi}_n\|$$
$$= \|\phi_{m_n}^{(n)}- \varphi_{m_n}\| + \|\phi_{m_n}^{(n)} -  \lim_{k\to\infty} \phi_{m_n}^{(k)} \| < \nu_n+ \sum_{k=n+1}^{\infty} \nu_{k},$$
gives the desired statement $\displaystyle \lim_{n\to \infty} \|\varphi_{m_n} - \widetilde{\varphi}_n \| =0$.
\end{proof}

The study of isomorphic copies in the dual space of a JB$^*$-triple requires an extra effort. It should be remarked here that
the next proposition can be considered as a quantitative version of \cite[Theorem 1]{Pfi94} and \cite[Theorem 2.3]{FerPe09}.

\begin{theorem}\label{t isomorphic copies in the dual of a JB-triple} Let $E$ be a JB$^*$-triple and let $(\varphi_m)$ be a
normalized sequence in $E^*$ spanning $\ell_1$ $r$-isomorphically {\rm(}with $0<r\leq 1)$. Then for each $\varepsilon>0$
there exist a subsequence $(\varphi_{m_n})$ of $(\varphi_m)$ and a sequence $(c_n)$ of mutually orthogonal elements of norm one in $E$
such that
\bgl
\varphi_{m_n} (c_n)> r (1-\varepsilon),\quad\quad\forall n\in\mathbb{N},\label{eq gl4.2}
\egl
and such that the restrictions ${\varphi_{m_n}}_{|\mathcal{C}}$ span $\ell_1$ $(r(1-\eps))$-isomorphically where $\mathcal{C}$ is the
abelian subtriple of $E$ generated by the $c_n$'s and isometric to a commutative C$^*$-algebra.
\end{theorem}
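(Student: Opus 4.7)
I propose to adapt the inductive scheme of Theorem \ref{t asymptotically isometric copies of ell_1 JBW*-triple preduals} to the present $r$-isomorphic (rather than almost isometric) setting. The induction produces indices $m_1<m_2<\cdots$ together with pairwise orthogonal norm-one $c_1,c_2,\ldots\in E$, each flanked by a compact tripotent $u_n\in E^{**}$ with $c_n\in E^{**}_2(u_n)$ (supplied by Lemma \ref{l compact tripotents in Cor 2.11}), such that $\varphi_{m_n}(c_n)>r(1-\varepsilon)$ and the cross values $|\varphi_{m_k}(c_n)|$ are summably small for $k\ne n$. The compact-tripotent data is essential: sums of pairwise orthogonal compact tripotents remain compact, so $E\cap E^{**}_0(u_1+\cdots+u_n)$ stays weak$^*$-dense in the JBW$^*$-subtriple $E^{**}_0(u_1+\cdots+u_n)$, and the next $c_{n+1}$ can be produced inside $E$ while being genuinely triple-orthogonal to all previous $c_k$.

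At stage $n\to n+1$, let $u=u_1+\cdots+u_n$, pick $M$ large in terms of $n,r,\varepsilon$, and consider the block $\varphi_{m^*+1},\ldots,\varphi_{m^*+M}$ for some $m^*>m_n$; this block still spans $\ell_1(M)$ $r$-isomorphically. An $r$-scaled version of Theorem \ref{t quantitative version of L-orthogonality in the dual space of a JB*-triple n-functionals} (obtained by tracking the constant $r$ through the ultrapower argument of Proposition \ref{p quantitative version of L-orthogonality in the dual space of a JB*-triple}, or by appeal to \cite[Th.\ 2.3]{FerPe09}) produces pairwise orthogonal norm-one $a_1,\ldots,a_M\in E$ with $\varphi_{m^*+j}(a_j)>r(1-\varepsilon/2)$. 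The orthogonality of the range tripotents $r(a_j)$ and the additivity of the prehilbertian seminorm squares $\|\cdot\|^2_{\varphi_{m_k}}$ on orthogonal elements give, for each $k\le n$,
\[
\sum_{j=1}^M\|r(a_j)\|^2_{\varphi_{m_k}}=\Bigl\|\sum_{j=1}^M r(a_j)\Bigr\|^2_{\varphi_{m_k}}\le 1,
\]
and a pigeonhole over $k\le n$ and $j\le M$ yields an index $j^*$ with $\|r(a_{j^*})\|^2_{\varphi_{m_k}}\le n/M$ for every $k\le n$ simultaneously. Setting $m_{n+1}=m^*+j^*$ and, by the spectral adjustment of Lemma \ref{l FerPer MathScan 2009 L22}, relocating $a_{j^*}$ inside $E\cap E^{**}_0(u)$ to produce $c_{n+1}$, Lemma \ref{l FerPer MathScan 2009 L21} converts the seminorm smallness into $|\varphi_{m_k}(c_{n+1})|\le 21\sqrt{n/M}$, which for $M$ large gives both exact orthogonality $c_{n+1}\perp c_k$ and the summable cross-term bounds.

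For the closing statement, the pairwise orthogonal $c_n$ generate an abelian subtriple $\mathcal{C}\cong C_0(K)$ in which they correspond to norm-one functions with pairwise disjoint supports. For any scalars $(\alpha_n)$, the test element $x=\sum_n\overline{\mathrm{sgn}(\alpha_n)}\,c_n\in\mathcal{C}$ has norm one, and
\[
\Bigl\|\sum_n\alpha_n\varphi_{m_n}|_{\mathcal{C}}\Bigr\|\ \ge\ \Bigl|\sum_n\alpha_n\varphi_{m_n}(x)\Bigr|\ \ge\ r(1-\varepsilon)\sum_n|\alpha_n|-\sum_n|\alpha_n|\sum_{k\ne n}|\varphi_{m_n}(c_k)|,
\]
with the last term at most $r\varepsilon\sum_n|\alpha_n|$ by the summability arranged in the induction, yielding the $(r(1-\varepsilon))$-isomorphic $\ell_1$-spanning after a harmless rescaling of $\varepsilon$.

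The main obstacle is the ``$r$-isomorphic versus almost isometric'' gap: Theorem \ref{t quantitative version of L-orthogonality in the dual space of a JB*-triple n-functionals} is stated only for almost isometric $\ell_1(n)$-copies, whereas our block is only $r$-isomorphic. The cleanest workaround is to verify that the ultrapower step in Proposition \ref{p quantitative version of L-orthogonality in the dual space of a JB*-triple} carries through with an $r$-isomorphic pair in the limit: Proposition \ref{p orthogonal sigma-finite tripotents}, applied to the renormalised functionals $\varphi_j/r$ in the ultraproduct, delivers orthogonal witnesses with $\varphi_j$-values exceeding $r-\varepsilon$, and this propagates through the induction of Theorem \ref{t quantitative version of L-orthogonality in the dual space of a JB*-triple n-functionals}. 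A secondary delicacy lies in adjusting $a_{j^*}$ inside $E\cap E^{**}_0(u)$ while controlling several seminorms simultaneously, which requires a modest extension of Lemma \ref{l FerPer MathScan 2009 L22} but no genuinely new idea.
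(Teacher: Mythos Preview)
Your overall framework---an induction producing orthogonal $c_n$ flanked by compact tripotents, a pigeonhole on the prehilbertian seminorms, and a final test-element argument---mirrors the paper's proof. But the step you flag as the ``main obstacle'' is a genuine gap, and your proposed workaround does not close it.

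The ultrapower route fails. If the $\varphi_j$ span $\ell_1(M)$ only $r$-isomorphically, then in the ultraproduct one obtains $\|[\varphi_1]\pm[\varphi_2]\|\geq 2r$, not $=2$, so $[\varphi_1]$ and $[\varphi_2]$ are \emph{not} $L$-orthogonal and Proposition~\ref{p orthogonal sigma-finite tripotents} does not apply; renormalising by $1/r$ produces functionals of norm $1/r>1$ and does not create $L$-orthogonality either. There is no ``$r$-scaled'' Proposition~\ref{p quantitative version of L-orthogonality in the dual space of a JB*-triple} to be had this way. The paper bridges the gap differently: at each stage it invokes \emph{James' distortion theorem} to pass from the $r$-isomorphic sequence to a block sequence $(\phi_k^{(n)})$ that spans $\ell_1$ $(1-\delta)$-isomorphically for any prescribed $\delta$, and then applies Lemma~\ref{l compact tripotents in Cor 2.11} as stated. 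The value $\phi_k^{(n)}(a_k)>1-\varepsilon_{n+1}/2$ then forces, by a one-line averaging argument over the block, some constituent index $m_{n+1}$ with $|\varphi_{m_{n+1}}(a_{n+1})|$ exceeding roughly $r(1-\sum\varepsilon_i)$.

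Your ``relocation'' step is also unjustified. Lemma~\ref{l FerPer MathScan 2009 L22} produces elements inside $E_x$; it does not push an element into $E\cap E^{**}_0(u)$ for a prescribed $u$. To relocate $a_{j^*}$ there without destroying $\varphi_{m_{n+1}}(a_{j^*})$ you would need $\|u\|_{\varphi_{m_{n+1}}}$ small, which your pigeonhole (run over the already-chosen $\varphi_{m_k}$, $k\leq n$) does not provide. The paper avoids this entirely: it applies James' distortion and Lemma~\ref{l compact tripotents in Cor 2.11} \emph{inside} the subtriple $F_n=E\cap E^{**}_0(v_n)$ (weak$^*$-dense in $E^{**}_0(v_n)$ because $v_n$ is compact), working with the projected functionals $\psi_m^{(n)}=\varphi_m P_0(v_n)/\|\varphi_m P_0(v_n)\|$, which by the inductive invariant \eqref{eq 4 prop isomorphic copies in dual} still span $\ell_1$ $r(1-\sum\varepsilon_i)$-isomorphically. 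The resulting $a_{n+1}$ is then automatically in $F_n$, hence orthogonal to all previous $c_k$. The paper's pigeonhole, incidentally, serves a different purpose from yours: it selects an infinite $N_{n+1}\subset N_n$ on which $\|u_{n+1}\|_{\varphi_m}$ is small, which is exactly what is needed to propagate \eqref{eq 4 prop isomorphic copies in dual} to the next stage. The cross-term bounds you build into the induction are instead obtained in the paper only at the very end, via Simons' extraction lemma.
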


\begin{proof} We may assume that $1\geq \varepsilon>0$, we consider a series $\displaystyle \sum_{n\geq 1} \varepsilon_n$ with
$\varepsilon_n>0$ and $\displaystyle \sum_{n= 1}^{\infty} \varepsilon_n = \frac{\varepsilon}{2}$. By induction on $n$
we shall define a strictly monotone sequence $(m_n)$ in $\mathbb{N}$, a strictly decreasing sequence $(N_n)$ of infinite subsets of
$\mathbb{N}$ (i.e. $N_n\supsetneq N_{n+1}$),
a sequence $(a_n)$ of mutually orthogonal norm-one elements in $E$, and a sequence $(u_n)$ of
mutually orthogonal compact tripotents in $E^{**}$ satisfying, for all $n\in\N$,
\begin{equation}\label{eq 0 prop isomorphic copies in dual}
m_n< \min N_n,
\end{equation}
\begin{equation}\label{eq 1 prop isomorphic copies in dual} a_n\in E_2^{**} (u_n),
\end{equation}
\begin{equation}\label{eq 2 prop isomorphic copies in dual} \left\| u_n\right\|_{\varphi_m} < r \frac{\varepsilon_n}{63} \quad \forall m\in N_{n},
\end{equation}
\begin{equation}\label{eq 3 prop isomorphic copies in dual} | \varphi_{m_n} (a_n) |> \left( 1-  r  \sum_{i=1}^{n-1} \frac{\varepsilon_i}{3} \right) r \left(1-\sum_{i=1}^{n} \varepsilon_i\right)
\end{equation}
(where $\sum_{i=1}^{0}=0$) and, for $v_n =  u_1 +\ldots +u_n$,
\begin{equation}\label{eq 4 prop isomorphic copies in dual}
\left\| \sum_{m\in N_n} \alpha_m \frac{\varphi_{m} P_0 (v_n)}{\|\varphi_{m} P_0 (v_n)\|} \right\|
\geq r \left(1-\sum_{i=1}^{n} \varepsilon_i\right) \sum_{m\in N_n} |\alpha_m| \quad\forall\alpha_m\in\mathbb{C}.
\end{equation}
Let us note that since
$$\left( 1-  r  \sum_{i=1}^{n-1} \frac{\varepsilon_i}{3} \right) r \left(1-\sum_{i=1}^{n} \varepsilon_i\right) > r \left(1-2 \sum_{i=1}^{n} \varepsilon_i\right) $$
$$> r \left(1-2 \sum_{i=1}^{\infty} \varepsilon_i\right) = r \left(1- \varepsilon\right), $$
the inequality in \eqref{eq 3 prop isomorphic copies in dual} proves $| \varphi_{m_n} (a_n) |> r \left(1- \varepsilon\right)$,
for every $n\in \mathbb{N}$.
The statement of the proposition will follow for $c_n = e^{i \theta_n} a_n$ for a suitable choice of $\theta_n\in \mathbb{R}$ such that
$ \varphi_{m_n} (c_n) =| \varphi_{m_n} (a_n) |$ for every natural $n$.\smallskip

We deal first with the case $n=1$. Set $N_0 = \mathbb{N}$. Let us take a natural number $j_1$ such that
$3 \frac{21}{\sqrt{j_1}} < r \varepsilon_1$.
Let $\delta_1 = \widetilde{\delta} (j_1, \varepsilon_1/2)>0$ be given by Lemma \ref{l compact tripotents in Cor 2.11}.
By James' distortion theorem there exist mutually disjoint finite subsets
$G^{(1)}_k\subset N_0$, finite sequences $(\lambda^{(1)}_m)_{m\in F_k^{(1)}}\subset \mathbb{C}$ such that
\begin{equation}\label{eq 1 James} \sum_{m\in G^{(1)}_k} |\lambda^{(1)}_m| \leq \frac1r, \hbox{ for every } k\in \mathbb{N},
\end{equation} and the functionals $\displaystyle \phi_k^{(1)} = \sum_{m\in G^{(1)}_k} \lambda^{(1)}_m \varphi_m$ satisfy
\begin{equation}\label{eq 2 James} \sum_{k\in N_0} |\alpha_k|
\geq \left\| \sum_{k\in N_0} \alpha_k \phi_{k}^{(1)} \right\| \geq \left(1-\delta_1 \right) \sum_{k\in N_0} |\alpha_k|
\quad\forall\alpha_k\in\mathbb{C}.
\end{equation}

By Lemma \ref{l compact tripotents in Cor 2.11} and the choice of $\delta_1$, we find mutually orthogonal elements
$a_1^{(1)},\ldots, a_{j_1}^{(1)}$ of norm one in $E$ and mutually orthogonal compact tripotents
$u_1^{(1)},\ldots, u_{j_1}^{(1)}$ in $E^{**}$ satisfying $a_k^{(1)} \in E^{**}_2 ( u_k^{(1)})$,
$$\Big\| \phi^{(1)}_k- \frac{\phi^{(1)}_k P_2 (r(a_k^{(1)}))}{\|\phi^{(1)}_k P_2 (r(a_k^{(1)}))\|}\Big\|
<\frac{\varepsilon_1}{2}, \hbox{ and } \phi^{(1)}_k (a_k^{(1)})> 1-\varepsilon_1/2,$$
for every $k=1,\ldots, j_1$.
Keeping in mind that $u_1^{(1)},\ldots, u_{j_1}^{(1)}$ are mutually orthogonal we deduce that
$$0\leq \sum_{k=1}^{j_1} \|u_k^{(1)}\|_{\varphi_m}^2 = \left\| \sum_{k=1}^{j_1} u_k^{(1)} \right\|_{\varphi_{m}}^2 \leq 1 \quad\forall m\in N_0.$$
It follows that there exist $k_1\leq j_1$ in $N_0$ and an infinite subset $N_1\subset N_0$
such that $k_1\notin N_1 $, $\max G_k^{(1)} <\min N_{1}$, for every $k=1,\ldots, j_1$, and
\begin{equation}\label{e 4.6 for n=1} \| u_{k_1}^{(1)} \|_{\varphi_{m}}^2 \leq \frac{1}{j_1} \quad\forall m\in N_1.
\end{equation}
Lemma \ref{l FerPer MathScan 2009 L21} implies that
\begin{equation}\label{eq 4.8 for n=1 step 1}\left\| \varphi_m - \varphi_m P_0 (u_{k_1}^{(1)})  \right\| \leq \frac{21}{\sqrt{j_1}}
\quad\forall m\in N_1.
\end{equation}
Define $a_1 = a_{k_1}^{(1)}$ and $u_1 = u_{k_1}^{(1)}$.
By \eqref{e 4.6 for n=1} and the choice of $j_1$ we obtain
\eqref{eq 2 prop isomorphic copies in dual} for $n=1$.\smallskip

We claim that there exists $m_1\in G_{k_1}^{(1)}$ satisfying $|\varphi_{m_1} (a_1) | > r (1-\varepsilon_1)$.
Otherwise, $|\varphi_{m} (a_1) | \leq r (1-\varepsilon_1)$, for every $m\in G_{k_1}^{(1)}$.
Recalling that $\phi^{(1)}_{k_1} (a_1)> 1-\frac{\varepsilon_1}{2}$, we have
$$1-\frac{\varepsilon_1}{2} < \phi^{(1)}_{k_1} (a_1) =  \left|  \sum_{m\in G^{(1)}_{k_1}} \lambda^{(1)}_m \varphi_m (a_1)  \right|
\leq r (1-\varepsilon_1) \sum_{m\in G^{(1)}_{k_1}} |\lambda^{(1)}_m| \leq 1-\varepsilon_1,$$ which is impossible.
This defines $m_1\notin N_1$ as in \eqref{eq 0 prop isomorphic copies in dual} and
\eqref{eq 3 prop isomorphic copies in dual} for $n=1$.\smallskip

Now, we set $\widetilde{\varphi}^{(1)}_m := \varphi_m P_0 (u_{1}).$ By \eqref{eq 4.8 for n=1 step 1} we have $\left\| \varphi_m - \widetilde{\varphi}^{(1)}_m \right\| \leq \frac{21}{\sqrt{j_1}}< \varepsilon_1/3 <1/3,$ for every $m\in N_1$.
The inequalities
$$1 = \| \varphi_m\| \geq  \| \widetilde{\varphi}^{(1)}_m\| \hbox{ and } 0\leq  1-  \| \widetilde{\varphi}^{(1)}_m\|
\leq \| \varphi_m - \widetilde{\varphi}^{(1)}_m\|<\frac{21}{\sqrt{j_1}}$$ imply that
$\| \widetilde{\varphi}^{(1)}_m\|\geq 1- \frac{21}{\sqrt{j_1}}> 1-\frac13> \frac12.$
Therefore,
$$\left\| \widetilde{\varphi}^{(1)}_m -\frac{\widetilde{\varphi}^{(1)}_m}{\|\widetilde{\varphi}^{(1)}_m\|}\right\|
= \|\widetilde{\varphi}^{(1)}_m\| \left| 1- \frac{1}{\|\widetilde{\varphi}^{(1)}_m\|} \right| \leq \frac{1}{\|\widetilde{\varphi}^{(1)}_m\|}-1 $$
$$= \frac{1}{\|\widetilde{\varphi}^{(1)}_m\|} (1-\|\widetilde{\varphi}^{(1)}_m\|) < 2 \frac{21}{\sqrt{j_1}},$$
which shows that
$$\left\| \varphi_m -\frac{\varphi_m P_0 (u_{1})}{\|\varphi_m P_0 (u_{1})\|}\right\|
= \left\| \varphi_m - \frac{\widetilde{\varphi}^{(1)}_m}{\|\widetilde{\varphi}^{(1)}_m\|} \right\| $$
$$\leq \left\| \varphi_m - \widetilde{\varphi}^{(1)}_m \right\| + \left\| \widetilde{\varphi}^{(1)}_m - \frac{\widetilde{\varphi}^{(1)}_m}{\|\widetilde{\varphi}^{(1)}_m\|} \right\| <  3 \frac{21}{\sqrt{j_1}}< r \varepsilon_1.$$
By hypothesis, $(\varphi_m)$ is a normalized sequence in $E^{*}$ spanning $\ell_1$ $r$-isomorphically, hence
$$\left\| \sum_{m\in N_1} \alpha_m \frac{\varphi_{m} P_0 (u_1)}{\|\varphi_{m} P_0 (u_1)\|} \right\|
\geq \left\| \sum_{m\in N_1} \alpha_m \varphi_{m} \right\|
-\left\| \sum_{m\in N_1} \alpha_m \left(\varphi_m -\frac{\widetilde{\varphi}^{(1)}_m}{\|\widetilde{\varphi}^{(1)}_m\|}\right) \right\|$$
$$ \geq r  \sum_{m\in N_1} |\alpha_m| - r \varepsilon_1 \sum_{m\in N_1} |\alpha_m| =r \left(1-\varepsilon_1\right) \sum_{m\in N_1} |\alpha_m|
\quad\forall\alpha_m\in\mathbb{C}.$$
This proves \eqref{eq 4 prop isomorphic copies in dual} for $n=1$, which concludes the first induction step.\smallskip

Suppose now, by the induction hypothesis, that $m_k$, $N_k$, $a_k$, and $u_k$ have been defined for $k\leq n$ according to
\eqref{eq 0 prop isomorphic copies in dual} -- \eqref{eq 4 prop isomorphic copies in dual}.
By \cite[Theorem 3.9]{FerPe10} the element $\displaystyle v_n = \sum_{k=1}^{n} u_k$
is a compact tripotent in $E^{**}$, therefore $F_{n}:= E\cap E^{**}_0 (v_n)$ is a weak$^*$-dense subtriple of $E^{**}_0 (v_n)$
whose second dual, $F_{n}^{**}$, identifies with $E^{**}_0 (v_n)$.\smallskip

To simplify notation, we write $\displaystyle\psi^{(n)}_{m} = \frac{\varphi_m P_0 (v_{n})}{\|\varphi_m P_0 (v_{n})\|}$ ($m\in N_{n}$),
and we regard $(\psi^{(n)}_m)$ as a normalized sequence in $F_{n}^{*}$. By \eqref{eq 4 prop isomorphic copies in dual}
$$\sum_{m\in N_n} |\alpha_m| \geq \left\| \sum_{m\in N_n} \alpha_m \psi^{(n)}_m \right\|
\geq r \left(1-\sum_{i=1}^{n} \varepsilon_i\right) \sum_{m\in N_n} |\alpha_m| \quad\forall\alpha_m\in\mathbb{C}$$
that is, $(\psi^{(n)}_{m})$ is a normalized basis spanning $\ell_1$
$\left(r \left(1-\sum_{i=1}^{n} \varepsilon_i\right)\right)$-isomorphically.\smallskip

Let us take a natural number $j_{n+1}$ such that $3 \frac{21}{\sqrt{j_{n+1}}} < r \varepsilon_{n+1}$. Let $\delta_{n+1} = \widetilde{\delta} (j_{n+1}, \varepsilon_{n+1}/2)>0$ given by Lemma \ref{l compact tripotents in Cor 2.11}.
By James' distortion theorem there exist mutually disjoint finite subsets $G^{(n)}_k\subset N_{n}$ $(k\in\mathbb{N})$, finite sequences
$(\lambda^{(n)}_m)_{m\in G_k^{(n)}}\subset \mathbb{C}$ such that
\begin{equation}\label{eq 1 James n+1} \displaystyle \sum_{m\in G^{(n)}_k} |\lambda^{(n)}_m|
\leq \frac{1}{\displaystyle r \left(1-\sum_{i=1}^{n} \varepsilon_i\right)} \hbox{ for all } k\in \mathbb{N},
\end{equation} and the functionals $\displaystyle \phi_k^{(n)} = \sum_{m\in G^{(n)}_k} \lambda^{(n)}_m \psi^{(n)}_{m}$ satisfy
\begin{equation}\label{eq 2 James n+1} \sum_{k\in N_n} |\alpha_k|  \geq \left\| \sum_{k\in N_n} \alpha_k \phi_{k}^{(n)} \right\|
\geq \left(1-\delta_{n+1} \right) \sum_{k\in N_n} |\alpha_k| \quad\forall\alpha_m\in\mathbb{C}.
\end{equation}

By Lemma \ref{l compact tripotents in Cor 2.11} and the choice of $\delta_{n+1}$, we find mutually orthogonal elements
$a_1^{(n)},\ldots, a_{j_{n+1}}^{(n)}$ of norm one in $F_{n}= E\cap E^{**}_0 (v_n)$ and mutually orthogonal compact tripotents
$u_1^{(n)},\ldots, u_{j_{n+1}}^{(n)}$ in $F_{n}^{**}\cong E^{**}_0 (v_n)$ satisfying $a_k^{(n)} \in E^{**}_2 (u_k^{(n)})$,
and
$$\Big\| \phi^{(n)}_k- \frac{\phi^{(n)}_k P_2 (r(a_k^{(n)}))}{\|\phi^{(n)}_k P_2 (r(a_k^{(n)}))\|}\Big\|
<\frac{\varepsilon_{n+1}}{2}, \hbox{ and } \phi^{(n)}_k (a_k^{(n)})> 1-\frac{\varepsilon_{n+1}}{2},$$
for every $k=1,\ldots, j_{n+1}$.
We remark that $v_n\perp u_k^{(n)}$, for every $k=1,\ldots, j_{n+1}$ because $u_k^{(n)}\in E^{**}_0 (v_n)$.\smallskip

Keeping in mind that $u_1^{(n)},\ldots, u_{j_{n+1}}^{(n)}$ are mutually orthogonal we deduce that
$$0\leq \sum_{k=1}^{j_{n+1}} \|u_k^{(n)}\|_{\varphi_m}^2 = \left\| \sum_{k=1}^{k_{n+1}} u_k^{(n)} \right\|_{\varphi_{m}}^2 \leq 1
\quad\forall m\in N_n.$$
It follows that there exist $k_{n+1}\leq j_{n+1}$ in $N_n$, and an infinite subset $N_{n+1}\subset N_n$
such that $k_{n+1}\notin N_{n+1}$, $\max G_k^{(n)} <\min N_{n+1}$, for $k=1,\ldots,j_{n+1}$, and
\begin{equation}\label{e 4.6 for n+1} \| u_{k_{n+1}}^{(n)} \|_{\varphi_{m}}^2 \leq \frac{1}{j_{n+1}}, \hbox{ for all } m\in N_{n+1}.
\end{equation}
Lemma \ref{l FerPer MathScan 2009 L21} implies that
\begin{equation}\label{eq 4.8 for n+1 step 1}\left\| \varphi_m - \varphi_m P_0 (u_{k_{n+1}}^{(n)})  \right\| \leq \frac{21}{\sqrt{j_{n+1}}}
\quad\forall m\in N_{n+1}.
\end{equation}
Define $a_{n+1} = a_{k_{n+1}}^{(n)}$ and $u_{n+1} = u_{k_{n+1}}^{(n)}$.
By \eqref{e 4.6 for n+1} and the choice of $j_{n+1}$ we obtain
\eqref{eq 2 prop isomorphic copies in dual} for $n+1$.
Since $a_{n+1}\in F_{n}= E\cap E^{**}_0 (v_n)$, and $a_1,\ldots,a_n\in E_2^{**} (v_n)$, it follows that $a_{n+1}\perp a_k,$ for $k=1,\ldots,n$.
Therefore, $a_1,\ldots, a_{n+1}$ are mutually orthogonal elements in the closed unit ball of $E$.\smallskip

We claim that there exists $m_{n+1}\in G_{k_{n+1}}^{(n)}$ satisfying
\begin{equation}\label{e claim one}\frac{1}{\|\varphi_{m_{n+1}} P_0 (v_{n})\|} |\varphi_{m_{n+1}} (a_{n+1}) |
> r \left(1-\sum_{i=1}^{n+1} \varepsilon_i\right).
\end{equation}
Otherwise, $\displaystyle \frac{1}{\|\varphi_{m} P_0 (v_{n})\|} |\varphi_{m} (a_{n+1}) |
\leq r \left(1-\sum_{i=1}^{n+1} \varepsilon_i\right)$, for all $m\in G_{k_{n+1}}^{(n)}$.
Recalling that $\phi^{(n)}_{k_{n+1}} (a_{n+1})> 1-\frac{\varepsilon_{n+1}}{2}$, and
$a_{n+1}\in F_{n}= E\cap E^{**}_0 (v_n)$, we have
$$\psi^{(n)}_{m} (a_{n+1}) = \frac{\varphi_m P_0 (v_{n})}{\|\varphi_m P_0 (v_{n})\|} (a_{n+1})
= \frac{1}{\|\varphi_m P_0 (v_{n})\|} \varphi_m (a_{n+1}),$$
which gives  $$1-\frac{\varepsilon_{n+1}}{2} < \phi^{(n)}_{k_{n+1}} (a_{n+1})
=  \left|  \sum_{m\in G^{(n)}_{k_{n+1}}} \lambda^{(n)}_m \psi^{(n)}_{m} (a_{n+1})  \right|$$
$$ \leq r \left(1-\sum_{i=1}^{n+1} \varepsilon_i\right) \sum_{m\in G^{(n)}_{k_{n+1}}} |\lambda^{(n)}_m|
\leq r \left(1-\sum_{i=1}^{n+1} \varepsilon_i\right) \frac{1}{\displaystyle r \left(1-\sum_{i=1}^{n} \varepsilon_i\right)} $$
$$=  1-\frac{\varepsilon_{n+1}}{\displaystyle \left(1-\sum_{i=1}^{n} \varepsilon_i\right)}<1-\frac{\varepsilon_{n+1}}{2}$$
which is impossible. This proves the claim in \eqref{e claim one}.\smallskip

By \eqref{eq 2 prop isomorphic copies in dual} for $i\leq n,$ we have
$$\|v_n\|_{\varphi_m} = \left\|\sum_{i=1}^{n} u_i\right\|_{\varphi_m}\leq \sum_{i=1}^{n} \|u_i\|_{\varphi_m}
<r \sum_{i=1}^{n} \frac{\varepsilon_i}{63} \quad\forall m\in N_{n+1}.$$
Now, since $1=\left\|  \varphi_{m_{n+1}} \right\|\geq \left\|  \varphi_{m_{n+1}} P_0 (v_n)\right\|$
we deduce via  Lemma \ref{l FerPer MathScan 2009 L21} that
$$0\leq 1-\left\|  \varphi_{m_{n+1}} P_0 (v_n)\right\| \leq \left\| \varphi_{m_{n+1}} -  \varphi_{m_{n+1}} P_0 (v_n)\right\|
< 21 r  \sum_{i=1}^{n} \frac{\varepsilon_i}{63}$$
which implies that $$ 1- r  \sum_{i=1}^{n} \frac{\varepsilon_i}{3} < \left\|  \varphi_{m_{n+1}} P_0 (v_n)\right\|$$
hence
$$|\varphi_{m_{n+1}} (a_{n+1}) |
> \left( 1- r  \sum_{i=1}^{n} \frac{\varepsilon_i}{3} \right) r \left(1-\sum_{i=1}^{n+1} \varepsilon_i\right)$$
by \eqref{e claim one}.
We have thus defined $m_{n+1}\notin N_{n+1}$ such that $m_n<m_{n+1}$ and such that \eqref{eq 3 prop isomorphic copies in dual}
holds for $n+1$.\smallskip

Finally we show \eqref{eq 4 prop isomorphic copies in dual} for $n+1$.
Since \eqref{eq 2 prop isomorphic copies in dual} holds for $i\leq n+1$ we get
\begin{equation}\label{eq 4.8 for n+1 step 2} \|v_{n+1}\|_{\varphi_{m}} = \left\|\sum_{i=1}^{n+1} u_{i} \right\|_{\varphi_{m}}
\leq \sum_{i=1}^{n+1} \left\| u_{i} \right\|_{\varphi_{m}} < r \sum_{i=1}^{n+1} \frac{\varepsilon_i}{63} \quad\forall m\in N_{n+1}.
\end{equation}
To simplify notation, let us denote $\widetilde{\varphi}^{(n+1)}_m := \varphi_m P_0 (v_{n+1})$, $m\in N_{n+1}$.
Lemma \ref{l FerPer MathScan 2009 L21} and \eqref{eq 4.8 for n+1 step 2} imply that
$$\left\| \varphi_m - \widetilde{\varphi}^{(n+1)}_m \right\| < 21 r \sum_{i=1}^{n+1} \frac{\varepsilon_i}{63}
= r \sum_{i=1}^{n+1} \frac{\varepsilon_i}{3} < r \frac{\varepsilon}{6} < \frac16 \quad\forall m\in N_{n+1}.$$
The inequalities
$$1 = \| \varphi_m\| \geq  \| \widetilde{\varphi}^{(n+1)}_m\| \hbox{ and }
0\leq  1-  \| \widetilde{\varphi}^{(n+1)}_m\|\leq \| \varphi_m - \widetilde{\varphi}^{(n+1)}_m\|<r \sum_{i=1}^{n+1} \frac{\varepsilon_i}{3},$$
imply that $\displaystyle\| \widetilde{\varphi}^{(n+1)}_m\|\geq 1- r \sum_{i=1}^{n+1} \frac{\varepsilon_i}{3}>1-\frac16> \frac12.$ Therefore, $$\left\| \widetilde{\varphi}^{(n+1)}_m -\frac{\widetilde{\varphi}^{(n+1)}_m}{\|\widetilde{\varphi}^{(n+1)}_m\|}\right\| = \|\widetilde{\varphi}^{(n+1)}_m\| \left| 1- \frac{1}{\|\widetilde{\varphi}^{(n+1)}_m\|} \right| \leq \frac{1}{\|\widetilde{\varphi}^{(n+1)}_m\|}-1 $$
$$= \frac{1}{\|\widetilde{\varphi}^{(n+1)}_m\|} (1-\|\widetilde{\varphi}^{(n+1)}_m\|) < 2 r \sum_{i=1}^{n+1} \frac{\varepsilon_i}{3}$$
which shows that
$$\left\| \varphi_m -\frac{\varphi_m P_0 (v_{n+1})}{\|\varphi_m P_0 (v_{n+1})\|}\right\|
= \left\| \varphi_m - \frac{\widetilde{\varphi}^{(n+1)}_m}{\|\widetilde{\varphi}^{(n+1)}_m\|} \right\|$$
$$ \leq \left\| \varphi_m - \widetilde{\varphi}^{(n+1)}_m \right\| + \left\| \widetilde{\varphi}^{(n+1)}_m
- \frac{\widetilde{\varphi}^{(n+1)}_m}{\|\widetilde{\varphi}^{(n+1)}_m\|} \right\| <  3 r \sum_{i=1}^{n+1} \frac{\varepsilon_i}{3}
= r \sum_{i=1}^{n+1} {\varepsilon_i},$$
for all $m\in N_{n+1}$.
By hypothesis, $(\varphi_m)$ is a normalized sequence in $E^{*}$ spanning $\ell_1$ $r$-isomorphically, hence
$$\left\| \sum_{m\in N_{n+1}} \alpha_m \frac{\varphi_{m} P_0 (v_{n+1})}{\|\varphi_{m} P_0 (v_{n+1})\|} \right\|
\geq $$ $$\left\| \sum_{m\in N_{n+1}} \alpha_m \varphi_{m} \right\| -\left\| \sum_{m\in N_{n+1}} \alpha_m \left(\varphi_m
-\frac{\varphi_m P_0 (v_{n+1})}{\|\varphi_m P_0 (v_{n+1})\|}\right) \right\|$$
$$ \geq r  \sum_{m\in N_{n+1}} |\alpha_m| - r \left(\sum_{i=1}^{n+1} \varepsilon_i\right) \sum_{m\in N_{n+1}} |\alpha_m|
=r \left(1- \sum_{i=1}^{n+1} \varepsilon_i \right) \sum_{m\in N_{n+1}} |\alpha_m|$$
for all $\alpha_m\in\mathbb{C}$.
This proves \eqref{eq 4 prop isomorphic copies in dual} for $n+1$ and shows \eqref{eq gl4.2}.\smallskip

By an extraction lemma of Simons \cite{Sim-DPP} we may (after passing to appropriate subsequences of $(\varphi_{m_n})$ and $(c_n)$
which we still denote by  $(\varphi_{m_n})$ and $(c_n)$) suppose that $\displaystyle \sum_{k\neq n}\betr{\varphi_{m_n}(c_k)}<\eps'$ for all $n$
where $\eps'>0$ is such that $r(1-\eps)-\eps'>r(1-2\eps)$. That the subtriple $\mathcal{C}$ generated by the $c_n$ is isometric to a commutative C$^*$-algebra can be seen as in \cite[Th.\ 2.3$b)\Rightarrow b')$]{FerPe09}.
Fix $(\alpha_n)\in\ell_1$, choose $\theta_n\in\mathbb{C}$ such that $\theta_n\alpha_n=\betr{\alpha_n}$ and set $\displaystyle c=\sum_{k\geq 1}\theta_k c_k$.
Then $\norm{c}\leq1$ and
\bglst
\left\|\sum_{n\geq 1}\alpha_n{\varphi_{m_n}}_{|\mathcal{C}}\right\|
&\geq&\left|\sum_{n\geq 1} \alpha_n\varphi_{m_n}(c)\right| \\ &=& \left|\sum_n\betr{\alpha_n}\varphi_{m_n}(c_n)+\sum_n\alpha_n \left(\sum_{k\neq n}\varphi_{m_n}(\theta_kc_k)\right)\right|\\
&\geq&r(1-\eps) \sum_n\betr{\alpha_n} -\sum_n\betr{\alpha_n}\left(\sum_{k\neq n}\betr{\varphi_{m_n}(c_k)}\right)\\
&\geq&(r(1-\eps)-\eps') \sum_n\betr{\alpha_n}\geq r(1-2\eps)\sum_n\betr{\alpha_n}.
\eglst
Up to an adjustment of $\eps$ this ends the proof.
\end{proof}


\end{document}